\documentclass[11pt, oneside]{article}
\usepackage{amsfonts,amssymb,amsmath,latexsym,amsthm}
\usepackage{mathrsfs,stmaryrd}
\usepackage{color}
\usepackage{hyperref}
\usepackage{enumerate,indentfirst,mathtools,verbatim,authblk}
\usepackage{tikz}
\usepackage{psfrag}
\usepackage{graphicx}
\usepackage{bm}
\usepackage{appendix}
\usepackage[rflt]{floatflt}
\usepackage{float}
\usepackage[square, comma, sort&compress, numbers]{natbib}
\numberwithin{equation}{section}
\allowdisplaybreaks

\newtheorem{theorem}{Theorem}[section]
\newtheorem{proposition}[theorem]{Proposition}
\newtheorem{lemma}[theorem]{Lemma}
\newtheorem{corollary}[theorem]{Corollary}
\theoremstyle{definition}

\theoremstyle{assumption}

\theoremstyle{remark}

\numberwithin{equation}{section}

\pagestyle{myheadings} \textwidth=16truecm \textheight=23truecm
\oddsidemargin=0mm \evensidemargin=0mm
\usepackage{amssymb,amsmath}
\headheight=5mm \headsep=0mm \topmargin=0mm

\begin{document}
\title{\LARGE\bf {Stabilization for a degenerate wave equation with time-varying delay in the boundary control input}}
\author{Menglan Liao\thanks{Corresponding author: Menglan Liao \newline \hspace*{6mm}{\it Email
addresses:}  liaoml@hhu.edu.cn}}
\affil{School of Mathematics, Hohai University, Nanjing, {\rm210098}, China.}
\affil{}
\renewcommand*{\Affilfont}{\small\it}
\renewcommand*{\Affilfont}{\small\it}
\date{} \maketitle
\vspace{-20pt}

{\bf Abstract:}  A  degenerate wave equation with time-varying delay in the boundary control input is considered. The well-posedness of the system is established by applying the semigroup theory. The boundary stabilization of the degenerate wave equation is concerned and the uniform exponential decay of solutions is obtained by combining the energy estimates with suitable Lyapunov functionals and an integral inequality under suitable conditions.

{\bf Keywords:}     degenerate wave equation; time-varying delay; stabilization; exponential decay.

{\bf MSC(2020):}   35L80; 93D23; 93D15.

\thispagestyle{empty}
\section{Introduction}
In this paper, we study the well-posedness and stability for the following degenerate wave equation with time-varying delay in the boundary control input
\begin{equation}\label{1.1}
\begin{cases}
u_{tt}-(a(x)u_x)_x=0,\quad&(t,x)\in (0,T)\times(0,1),\\
\mu_1u_t(t,1)+\mu_2u_t(t-\tau(t),1)+u_x(t,1)+\beta u(t,1)=0,\quad&t\in(0,T),\\
u_t(t-\tau(0),1)=f_0(t-\tau(0)),\quad&t\in[0,\tau(0)],\\
\begin{cases}
u(t,0)=0\quad&\text{if }\mu_a\in[0,1),\\
\lim_{x\to0^+}a(x)u_x(t,x)=0\quad&\text{if }\mu_a\in[1,2),
\end{cases}&t\in(0,T),\\
u(0,x)=u_0(x),~u_t(0,x)=u_1(x),\quad&x\in[0,1].\\
\end{cases}
\end{equation}
The function $a\in C([0,1])\cap C^1((0,1])$ and satisfies 
\begin{equation}
\label{1.2}
\begin{cases}
(1)~a(x)>0\quad\forall x\in(0,1],\quad a(0)=0,\\
(2)~\mu_a:=\sup_{0<x\le 1}\frac{x|a'(x)|}{a(x)}<2,\\
(3)~a\in C^{[\mu_a]}([0,1]),
\end{cases}
\end{equation}
where $[\cdot]$ means the integer part. The degeneracy of problem \eqref{1.1} at $x = 0$ is measured by the parameter $\mu_a$ and  one says that problem \eqref{1.1} degenerates \emph{weakly} if $\mu_a\in [0,1)$, \emph{strongly} if $\mu_a>1$.  The time-varying delay $\tau(t)\in W^{2,\infty}([0,T])$ for any $T>0$, and there are positive constants $\tau_0$, $\tau_1$ and $d$ so that 
\begin{equation}
\label{1.3}
0<\tau_0\le \tau(t)\le \tau_1,\quad 0\le\tau'(t)\le d<1\quad \forall t>0.
\end{equation}
$\mu_1$ is a
positive real number, and $\mu_2$ is a real number and satisfy 
\begin{equation}
\label{2144}
\mu_1\ge \frac{|\mu_2|}{\sqrt{1-d}}.
\end{equation}
The initial datum $(u_0,u_1,f_0)$ belongs to a suitable space.

Degenerate equations have received a lot of attention mainly due to their accurate descriptions of several complex phenomena in numerous fields of science, particularly in biology and engineering.  Compared with  degenerate parabolic equations, the related results such as  controllability, observability and stabilization issues on degenerate hyperbolic equations are less.  The pioneering and important work on the control and stabilization of degenerate hyperbolic equations was obtained by Alabau-Boussouira, Cannarsa and Leugering \cite{A2017}(see also the special case of a power-like coefficient $a(x)=x^\alpha$ with $\alpha\in (0,2)$ studied by Zhang and Gao \cite{Z2017}). More precisely, they  proved that  $\eqref{1.1}_2$ with $\mu_1=1,~\mu_2=0$(i.e.  without time-varying delay) stabilizes exponentially the corresponding solution of the degenerate wave equation based on 
the dominant energy approach and suitable elliptic estimates.  Since then,  hyperbolic degenerate problems have received a lot of attention.  Boutaayamou, Fragnelli and  Mugnai \cite{B2023} considered the boundary controllability and the last two authors \cite{F2024} also studied the linear stabilization for a degenerate wave equation in non-divergence form with drift.   For degenerate beam problems, the controllability and stabilization results can be found in \cite{C20241,C20242,C20243}. It is worth noting that there is no delay term in all the previous papers. Until recently, Camasta, Fragnelli, Pignotti \cite{C2025}  have investigated first the well-posedness and stability for the following degenerate problems with time delay in the internal and nonlinear source
\begin{equation*}
\begin{cases}
y_{tt}(t,x)-A_i y(t,x)+k(t)BB^*y_t(t-\tau,x)=f(y(t,x)),\quad&(t,x)\in (0,\infty)\times(0,1),\\
y(0,x)=y^0(x),~y_t(0,x)=y^1(x),\quad&x\in(0,1),\\
B_iy(t,0)=0,\quad&t>0,\\
C_iy(t,1)=0,\quad&t>0,\\
B^*y_t(s,x)=g(s),\quad&s\in[-\tau,0],
\end{cases}
\end{equation*}
$i=1,2,3,4.$ $B_iy(t,0)$ and $C_iy(t,1)$  are suitable boundary conditions related to the operators $A_i$ with $i=1,2,3,4.$ The bounded linear operator $B$ that appears is defined on a real Hilbert space with adjoint $B^*$ and $f$ is a nonlinear function satisfying suitable hypotheses. By rewriting the problems in an abstract way and by using
semigroup theory and energy method, they  established the  well-posedness and stability.

It is well-known that time delay often appears in many biological, electrical engineering systems and mechanical applications.  And in many cases, the time delay may destabilize the system, see \cite{D1986,D1988,D1997} and the references therein. Therefore, it is crucial to discuss the stabilization of the wave equation under the effect of time delay in boundary or internal. It is certainly beyond
the scope of the present paper to give a comprehensive review for the PDEs with time-delay effects, the interested readers can refer to \cite{D2024,N2006,N2007,NV2009,N2011,S2021} and the references therein.  In this regard, we would like to give several references to make a comparison with this paper. Nicaise and Pignotti \cite{N2006} obtained the exponential stability for problem \eqref{1.1} with $a(x)=1$, $\beta=0$ in the case of the constant time delay(i.e. $\tau(t)=\tau>0$) in higher dimensions under the condition of $0<\mu_2<\mu_1$.  Subsequently, by introducing suitable energies and suitable Lyapunov functionals, the two authors and Fridman \cite{N2011} extended this result  to the case of the time-varying delay under the condition of $0<\mu_2<\sqrt{1-d}\mu_1$ and $\tau'(t)\le d<1$.   These results mainly concern the non-degenerate wave equations with time delay.  To our best knowledge, there is no related work that considers the case of degenerate wave equations with time-varying delay. The purpose of this paper is to explore the effect of the time-varying delay term in the boundary feedback to the stability of the degenerate wave equation. Due to the presence of the time-varying delay, we find it difficult to apply directly multiplier methods to establish integral inequalities, and then to give the exponential decay. By combining the energy estimates with suitable Lyapunov functionals and an integral inequality, we prove the uniformly exponential decay of solutions.

The paper is organized as follows. In section \ref{sec2}, we recall functional spaces and some pivotal lemmas.  In section \ref{sec3}, we combine Kato's variable norm technique and the semigroup theory to establish the well-posedness. The boundary stabilization result(see Theorem \ref{thm4.5}) for $\beta>0$ is discussed in section \ref{sec4}.

\section{Preliminaries}\label{sec2}
Following \cite{A2017}, let us denote by $V_a^1(0, 1)$ the space of all functions $u\in L^2(0, 1)$ such that
$u$ is locally absolutely continuous in $(0, 1]$, and $\sqrt{a}u_x\in L^2(0, 1).$
It is not difficult  to see that $V_a^1(0, 1)$ is a Hilbert space with the scalar product
\[\langle u,v\rangle_{1,a}=\int_0^1\Big(a(x)u'(x)v'(x)+u(x)v(x)\Big)dx\quad u,v\in V_a^1(0, 1)\]	
and associated norm
\[\|u\|_{1,a}=\Big\{\int_0^1\Big(a(x)|u'(x)|^2+|u(x)|^2\Big)dx\Big\}^{\frac{1}{2}}\quad u\in V_a^1(0, 1).\]

Denote $W_a^1(0,1)$ by the closed subspace of $V_a^1(0, 1)$ consisting of all the functions $u\in V_a^1(0, 1)$ such that $u(0)=0$ if $\mu_a\in [0,1),$ and if $\mu_a\in [1,2)$, the space $V_a^1(0, 1)$ itself. In addition, we define 
\[W_a^2(0,1):=V_a^2(0, 1)\cap W_a^1(0,1),\]
where $V_a^2(0, 1)$ is the space of all functions $u\in V_a^1(0, 1)$ satisfying $au'\in H^1(0,1)$.

\begin{proposition}\label{prop2.1}\cite[Proposition 2.5]{A2017}
Let \eqref{1.2} hold. Then the following properties hold 
\begin{enumerate}[$(1)$]
  \item For every $u\in V_a^1(0,1)$, \[\lim_{x\to 0^+}xu
  ^2(x)=0,\]
  \begin{equation}
\label{1538}
u^2(1)\le \max\Big\{2,\frac{1}{a(1)}\Big\}\|u\|_{1,a}^2.
\end{equation}
  Moreover, if $\mu_a\in [0,1)$, then $u$ is absolutely continuous in $[0,1]$.
  \item For every $u\in V_a^2(0,1)$, \[\lim_{x\to 0^+}xa(x)(u'
  (x))^2=0.\]
  For all $u\in V_a^2(0,1)$ and $\varphi\in V_a^1(0,1)$
  \[\lim_{x\to0}a(x)\varphi(x)u'(x)=0,\]
  assuming, in addition, $\varphi(0)=0$ when $\mu_a\in [0,1).$
   \item If $\mu_a\in [1,2)$, then for every $u\in V_a^2(0,1)$
   \[\lim_{x\to0}a(x)u'(x)=0.\]
\end{enumerate}
\end{proposition}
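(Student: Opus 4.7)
The plan is to exploit two ingredients: pointwise estimates on $a$ derived from $\mu_a<2$, and a one-dimensional integration-by-parts identity for weighted Sobolev functions. Integrating $|(\ln a)'|=|a'/a|\le \mu_a/x$ on $[x,1]$ yields the two-sided comparison $a(1)x^{\mu_a}\le a(x)\le a(1)x^{-\mu_a}$ for $x\in(0,1]$; in particular $s^2/a(s)\le 1/a(1)$ throughout $(0,1]$ since $\mu_a<2$, and $1/a\in L^1(0,1)$ whenever $\mu_a<1$.

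For part (1), since $u\in V_a^1(0,1)$ is locally absolutely continuous on $(0,1]$, the product rule applied to $s\,u^2(s)$ on $[x,1]$ gives
\[u^2(1)-x\,u^2(x)=\int_x^1 u^2(s)\,ds+2\int_x^1 s\,u(s)\,u'(s)\,ds.\]
Cauchy-Schwarz combined with $s^2/a(s)\le 1/a(1)$ shows that $\int_0^1 s\,u\,u'\,ds$ is absolutely convergent, so $x\,u^2(x)$ tends to some limit $L\ge 0$ as $x\to 0^+$. If $L>0$ then $u^2(x)\sim L/x$ near $0$, forcing $u\notin L^2(0,1)$, a contradiction; hence $L=0$. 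Passing to the limit and applying Young's inequality $2s\,u\,u'\le u^2+s^2(u')^2$ delivers the trace inequality $u^2(1)\le \max\{2,1/a(1)\}\,\|u\|_{1,a}^2$. When $\mu_a<1$, writing $u'=(1/\sqrt a)\,\sqrt a\,u'$ and using Cauchy-Schwarz with $1/a\in L^1$ shows $u'\in L^1(0,1)$, so $u$ is absolutely continuous on $[0,1]$.

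Parts (2) and (3) rest on the fact that for $u\in V_a^2(0,1)$ the function $g:=au'$ lies in $H^1(0,1)\hookrightarrow C([0,1])$, so $g(0)$ exists. I would prove part (3) first: if $g(0)=\ell\ne 0$, then $|g|\ge |\ell|/2$ on some $(0,\delta]$, hence $\int_0^1 a(u')^2\,dx=\int_0^1 g^2/a\,dx\ge (\ell^2/4)\int_0^\delta 1/a\,dx$. Since $\mu_a\in[1,2)$ forces $[\mu_a]=1$, the regularity hypothesis $a\in C^{[\mu_a]}([0,1])$ together with $a(0)=0$ gives $a(x)\le \|a'\|_\infty\, x$ by the mean value theorem, so $\int_0^\delta 1/a=\infty$, contradicting $u\in V_a^1$. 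Hence $g(0)=0$. For part (2), split on $\mu_a$: when $\mu_a<1$, $g$ is bounded and $x/a(x)\le x^{1-\mu_a}/a(1)\to 0$, giving $x\,a(u')^2=x\,g^2/a\to 0$; when $\mu_a\ge 1$, the estimate $|g(x)|\le \sqrt x\,\|g'\|_{L^2}$ coming from $g(0)=0$ yields $x\,g^2/a\le x^{2-\mu_a}\|g'\|^2/a(1)\to 0$. The product $a\,\varphi\,u'=g\,\varphi$ is handled analogously: if $\mu_a<1$ and $\varphi(0)=0$, then $g$ is bounded and part (1) gives $\varphi(x)\to 0$; if $\mu_a\ge 1$, then $(g\varphi)^2\le \|g'\|^2\,(x\,\varphi^2)\to 0$ by the first conclusion of part (1).

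The main obstacle is part (3). The naive bound $a(x)\ge a(1)x^{\mu_a}$ only gives an upper bound on $1/a$, so the divergence $\int_0^\delta 1/a=\infty$ needed for the contradiction does not follow from $\mu_a\ge 1$ alone. It is crucial to invoke the $C^{[\mu_a]}$-regularity hypothesis, which forces $a(x)=O(x)$ at the origin; without it, constructions such as $a(x)=x\log^2(c/x)$ with $\mu_a=1$ have $1/a\in L^1$ and would defeat the conclusion.
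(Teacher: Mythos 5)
Your argument is correct and complete; the paper itself offers no proof of this proposition (it is quoted verbatim from \cite[Proposition 2.5]{A2017}), and your derivation follows essentially the same route as that reference: the pointwise comparison $a(1)x^{\mu_a}\le a(x)$ from integrating $a'/a$, the identity $(xu^2)'=u^2+2xuu'$ for the trace estimate, and the continuity of $au'\in H^1(0,1)$ combined with $\int_0^\delta a^{-1}\,dx=\infty$ (via $a\in C^1([0,1])$, $a(0)=0$) to force $(au')(0)=0$ when $\mu_a\ge 1$. Your closing remark correctly identifies that the hypothesis $a\in C^{[\mu_a]}([0,1])$ is genuinely needed for part (3), since $\mu_a\ge 1$ alone does not yield the divergence of $\int_0^\delta a^{-1}\,dx$.
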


\begin{proposition}\cite[Proposition 4.3]{A2017}
Let \eqref{1.2} hold. Then 
\begin{equation}
\label{2021}
\|u\|_{L^2(0,1)}^2\le 2|u(1)|^2+C_a'\int_0^1a(x)|u'|^2dx\quad\forall u\in W_a^1(0,1),
\end{equation}
where $C_a'=\frac{1}{a(1)}\min\Big\{4,\frac{2}{2-\mu_a}\Big\}$.
\end{proposition}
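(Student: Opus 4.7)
The plan is to prove two separate estimates,
$$\|u\|_{L^2(0,1)}^2 \le 2|u(1)|^2 + \frac{4}{a(1)}\int_0^1 a(x)|u'|^2\, dx$$
and
$$\|u\|_{L^2(0,1)}^2 \le 2|u(1)|^2 + \frac{2}{(2-\mu_a) a(1)} \int_0^1 a(x) |u'|^2\, dx,$$
and then combine them by taking the minimum of the two constants in front of the Dirichlet-type term. Both arguments rest on the pointwise bound $a(x) \ge a(1) x^{\mu_a}$ on $(0,1]$, which I would establish as a preliminary step: condition~(2) in \eqref{1.2} gives $|(\log a)'(x)| = |a'(x)|/a(x) \le \mu_a/x$, and integrating this on $[x,1]$ yields $|\log a(1) - \log a(x)| \le \mu_a |\log x|$, i.e.\ $a(x) \ge a(1)x^{\mu_a}$.

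For the first bound I would integrate by parts in $\int_0^1 (xu^2)'\, dx$. Proposition~\ref{prop2.1}(1) kills the boundary contribution at the origin, producing the exact identity
$$\int_0^1 u^2\, dx = u(1)^2 - 2\int_0^1 x\, u\, u'\, dx.$$
Writing the cross term as $2\cdot \frac{x}{\sqrt{a(x)}}u \cdot \sqrt{a(x)}\,u'$ and applying Young's inequality with parameter $\varepsilon>0$ generates a term $\frac{1}{\varepsilon}\int_0^1 \frac{x^2}{a(x)}u^2\, dx$; since $x^2/a(x)\le x^{2-\mu_a}/a(1)\le 1/a(1)$, this is bounded by $\frac{1}{\varepsilon a(1)}\|u\|_{L^2}^2$. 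Choosing $\varepsilon = 2/a(1)$ lets me absorb that term on the left and delivers the constant $4/a(1)$.

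For the sharper constant $\frac{2}{(2-\mu_a) a(1)}$, which improves $4/a(1)$ precisely when $\mu_a < 3/2$, I would avoid Young's inequality and argue instead by the fundamental theorem of calculus plus Fubini. Writing $u(x) = u(1) - \int_x^1 u'(s)\, ds$ and using $(a+b)^2\le 2a^2 + 2b^2$ gives $u(x)^2 \le 2u(1)^2 + 2\bigl(\int_x^1 u'(s)\, ds\bigr)^2$. A weighted Cauchy-Schwarz then yields
$$\left(\int_x^1 u'(s)\, ds\right)^2 \le \left(\int_x^1 \frac{ds}{a(s)}\right)\int_0^1 a(s)|u'(s)|^2\, ds.$$
Integrating in $x$ and swapping the order of integration via Fubini produces the factor $\int_0^1 s/a(s)\, ds$, which by the key pointwise bound satisfies $\int_0^1 \frac{s}{a(s)}\, ds \le \frac{1}{a(1)}\int_0^1 s^{1-\mu_a}\, ds = \frac{1}{(2-\mu_a) a(1)}$; this is where the standing hypothesis $\mu_a<2$ becomes indispensable for integrability at $0$.

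The main difficulty is ensuring that both arguments remain valid in the strongly degenerate regime $\mu_a \in [1,2)$, where $W_a^1 = V_a^1$ and $u(0)$ need not vanish. Fortunately, neither step requires $u(0)=0$: the integration-by-parts route invokes only the limit $\lim_{x\to 0^+} x u(x)^2 = 0$ guaranteed by Proposition~\ref{prop2.1}(1) for every $u\in V_a^1$, while the FTC route uses only the local absolute continuity of $u$ on $(0,1]$ built into the definition of $V_a^1$. The only other delicate point is the Fubini exchange, which is justified by the integrability of $s/a(s)$ on $(0,1)$ established above.
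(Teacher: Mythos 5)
This statement is quoted in the paper as \cite[Proposition 4.3]{A2017} with no proof supplied, so there is no in-paper argument to compare against; your proposal has to stand on its own, and it does. Both prongs are correct: the preliminary bound $a(x)\ge a(1)x^{\mu_a}$ follows exactly as you say from integrating $|a'|/a\le \mu_a/x$ on $[x,1]$; the integration-by-parts route with $\varepsilon=2/a(1)$ delivers $4/a(1)$ after absorbing $\tfrac12\|u\|_{L^2}^2$ (legitimate since $u\in L^2$ by definition of $V_a^1$); and the FTC--Cauchy--Schwarz--Fubini route delivers $2/((2-\mu_a)a(1))$, with $\mu_a<2$ guaranteeing the integrability of $s/a(s)\le s^{1-\mu_a}/a(1)$ at the origin. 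Taking the minimum of the two constants is exactly what the form of $C_a'$ demands. The only step I would ask you to write out more carefully is the identity $\int_0^1u^2\,dx=u(1)^2-2\int_0^1xuu'\,dx$: since $u$ is only locally absolutely continuous on $(0,1]$, you should integrate $(xu^2)'$ over $[\epsilon,1]$ and pass to the limit, using $\lim_{x\to0^+}xu^2(x)=0$ from Proposition \ref{prop2.1}(1) and the bound $|xuu'|\le a(1)^{-1/2}|u|\sqrt{a}|u'|\in L^1(0,1)$ (which follows from $x/\sqrt{a(x)}\le x^{1-\mu_a/2}/\sqrt{a(1)}\le 1/\sqrt{a(1)}$) to justify convergence of both integrals. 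You already have all the ingredients for this; it is a presentational gap, not a mathematical one.
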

\begin{proposition}\cite[Proposition 4.4]{A2017}
Let \eqref{1.2} hold and $\beta>0$. Then for every $\lambda\in \mathbb{R}$, the variational problem 
\begin{equation*}
\int_0^1 az'\varphi'dx+\beta a(1)z(1)\varphi(1)=\lambda a(1)\varphi(1)\quad\forall \varphi\in W_a^1(0,1)
\end{equation*}
admits a unique solution $z\in W_a^1(0,1)$ which satisfies the elliptic estimates 
\begin{equation}
\label{1729}
\interleave z\interleave _{1,a}^2\le \frac{a(1)}{\beta}\lambda^2,\quad \|z\|_{L^2(0,1)}^2\le \frac{a(1)}{\beta\alpha_a}\lambda^2,
\end{equation}
with \[\interleave z\interleave _{1,a}=\Big(\int_0^1 a(x)|z'(x)|^2dx+\beta a(1)z^2(1)dx\Big)^{\frac12},\quad\alpha_a=\min\Big\{\frac{1}{C_a'},\frac{\beta a(1)}{2}\Big\}>0.\] Moreover, $z\in W_a^2(0,1)$ and solves
\begin{equation}
\label{1736}
\begin{cases}
-(az')'=0,\\
z'(1)+\beta z(1)=\lambda.
\end{cases}
\end{equation}
\end{proposition}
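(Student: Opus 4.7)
The plan is to solve the variational problem by Lax--Milgram, extract the elliptic estimates by testing with $z$ itself, and finally upgrade the regularity in order to recover the strong formulation \eqref{1736}. Introduce on $W_a^1(0,1)$ the bilinear form
\begin{equation*}
B(z,\varphi)=\int_0^1 a(x)z'(x)\varphi'(x)\,dx+\beta a(1)z(1)\varphi(1)
\end{equation*}
and the linear form $L(\varphi)=\lambda a(1)\varphi(1)$. Boundedness of $B$ and $L$ on $W_a^1(0,1)$ follows from Cauchy--Schwarz and the trace bound \eqref{1538}. For coercivity, the weighted Poincaré-type inequality \eqref{2021} yields
\begin{equation*}
\|z\|_{1,a}^2\le (1+C_a')\int_0^1 a|z'|^2\,dx+2z^2(1)\le \max\Big\{1+C_a',\tfrac{2}{\beta a(1)}\Big\}\,B(z,z),
\end{equation*}
where the last step uses $c_1 A+c_2 B\le \max(c_1,c_2)(A+B)$ with $A=\int_0^1 a|z'|^2\,dx$ and $B=\beta a(1)z^2(1)$. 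Lax--Milgram then produces a unique $z\in W_a^1(0,1)$ with $B(z,\varphi)=L(\varphi)$.

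For the elliptic estimates, set $\varphi=z$ in the variational identity to get $\interleave z\interleave_{1,a}^2=\lambda a(1) z(1)$. Since $\beta a(1)z^2(1)\le \interleave z\interleave_{1,a}^2$, a Young inequality of the form $\lambda a(1)z(1)\le \tfrac{a(1)}{2\beta}\lambda^2+\tfrac{\beta a(1)}{2}z^2(1)$ absorbs half of the $\interleave\cdot\interleave_{1,a}$-norm on the right and gives the first bound in \eqref{1729}. For the $L^2$-bound, applying \eqref{2021} once more gives
\begin{equation*}
\|z\|_{L^2(0,1)}^2\le 2z^2(1)+C_a'\int_0^1 a|z'|^2\,dx\le \tfrac{1}{\alpha_a}\interleave z\interleave_{1,a}^2,
\end{equation*}
since $1/\alpha_a=\max\{C_a',2/(\beta a(1))\}$; inserting the first bound of \eqref{1729} yields the second.

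To recover \eqref{1736}, I would first test the variational identity with $\varphi\in C_c^\infty(0,1)\subset W_a^1(0,1)$ (so $\varphi(1)=0$), obtaining $(az')'=0$ in $\mathscr{D}'(0,1)$. Hence $az'$ is constant on $(0,1)$, in particular $az'\in H^1(0,1)$, so $z\in V_a^2(0,1)$ and therefore $z\in W_a^2(0,1)$. Integrating by parts in the variational identity against a general $\varphi\in W_a^1(0,1)$ gives
\begin{equation*}
a(1)z'(1)\varphi(1)-\lim_{x\to 0^+}a(x)\varphi(x)z'(x)+\beta a(1)z(1)\varphi(1)=\lambda a(1)\varphi(1).
\end{equation*}
The main obstacle is justifying the vanishing of the limit at the degenerate endpoint $x=0$, which is precisely what Proposition \ref{prop2.1}(2) provides: in the weakly degenerate regime $\mu_a\in[0,1)$ one uses $\varphi(0)=0$ (built into the definition of $W_a^1(0,1)$), while in the strongly degenerate regime $\mu_a\in[1,2)$ the conclusion holds directly from $z\in V_a^2(0,1)$ and $\varphi\in V_a^1(0,1)$. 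Finally, choosing $\varphi$ with $\varphi(1)\neq 0$ yields the Robin condition $z'(1)+\beta z(1)=\lambda$, which together with $(az')'=0$ is exactly \eqref{1736}.
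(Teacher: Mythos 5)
Your proof is correct, and it follows the standard Lax--Milgram argument that the cited source [A2017, Proposition 4.4] uses; the present paper simply quotes the result without proof, so there is no alternative argument here to compare against. All the key points check out: the coercivity via \eqref{2021}, the absorption of $\tfrac{\beta a(1)}{2}z^2(1)$ into $\interleave z\interleave_{1,a}^2$ after Young's inequality, and the careful treatment of the boundary term at $x=0$ via Proposition \ref{prop2.1}(2), distinguishing the weakly and strongly degenerate cases.
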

To prove the uniformly exponential decay, we need the following well-known result.
\begin{theorem}\label{thmin}\cite[Theorem 8.1]{K1994}
Assume that $E:[0,+\infty)\to [0,+\infty)$ is a nonincreasing function and that there is a constant $M>0$ such that 
\[\int_t^\infty E(s)ds\le ME(t)\quad \forall t\in [0,+\infty).\]
Then we have 
\[E(t)\le E(0)e^{1-\frac{t}{M}}\quad \forall t\in [M,+\infty).\]
\end{theorem}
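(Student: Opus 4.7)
The plan is to follow the classical Komornik argument based on the auxiliary function $h(t):=\int_t^{\infty} E(s)\,ds$. Since $E$ is nonincreasing and nonnegative, $h$ is well-defined on $[0,\infty)$ (the hypothesis already implies $h(0)\le ME(0)<\infty$), absolutely continuous, nonincreasing, and satisfies $h'(t)=-E(t)$ almost everywhere. First I would translate the assumed bound $h(t)\le ME(t)$ into the differential inequality
\[
h'(t)+\frac{1}{M}h(t)\le 0\quad\text{a.e. }t\ge 0.
\]
Multiplying by the integrating factor $e^{t/M}$ and integrating from $0$ to $t$ gives $e^{t/M}h(t)\le h(0)$, that is,
\[
h(t)\le h(0)\,e^{-t/M}\quad\forall t\ge 0.
\]

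Next I would exploit the monotonicity of $E$ to extract a pointwise bound on $E(t)$ from the integral bound on $h$. For $t\ge M$, since $E$ is nonincreasing,
\[
ME(t)=\int_{t-M}^{t}E(t)\,ds\le \int_{t-M}^{t}E(s)\,ds\le \int_{t-M}^{\infty}E(s)\,ds=h(t-M).
\]
Combining this with the exponential decay of $h$ already obtained and with $h(0)\le ME(0)$, I arrive at
\[
ME(t)\le h(0)\,e^{-(t-M)/M}\le ME(0)\,e^{1-t/M},
\]
and division by $M$ yields exactly the advertised estimate $E(t)\le E(0)e^{1-t/M}$ for every $t\ge M$.

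There is no real obstacle in this argument; the only mildly delicate point is justifying the differential manipulations when $E$ is only assumed to be nonincreasing (hence merely measurable and bounded on compact sets a priori). This I would handle by noting that nonincreasing implies $h$ is locally Lipschitz, so the identity $h'=-E$ a.e. and the integration of the inequality $(e^{t/M}h)'\le 0$ are valid in the sense of absolutely continuous functions, which is enough to conclude.
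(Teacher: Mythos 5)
Your proof is correct and is precisely the classical Komornik argument (the paper does not reprove this result but cites \cite[Theorem 8.1]{K1994}, whose proof is exactly this: the integrating-factor bound $h(t)\le h(0)e^{-t/M}$ for $h(t)=\int_t^\infty E(s)\,ds$, followed by $ME(t)\le h(t-M)$ from monotonicity). Your handling of the regularity issue — $h$ is locally Lipschitz, hence absolutely continuous with $h'=-E$ a.e., so the differential inequality can be integrated — is the right way to make the standard argument rigorous.
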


\section{Well-posedness}\label{sec3}
Let us introduce a new variable  
\begin{equation*}
w(\delta,t):=u_t(t-\delta\tau(t),1)\quad \forall (\delta,t)\in (0,1)\times (0,T),
\end{equation*}
therefore, problem \eqref{1.1} can be reformulated to the following equivalent form
\begin{equation}\label{3.2}
\begin{cases}
u_{tt}-(a(x)u_x)_x=0,\quad&(t,x)\in (0,T)\times(0,1),\\
\tau(t)w_t(\delta,t)+(1-\delta\tau'(t))w_\delta(\delta,t)=0,\quad& (\delta,t)\in (0,1)\times (0,T),\\
\mu_1u_t(t,1)+\mu_2w(1,t)+u_x(t,1)+\beta u(t,1)=0,\quad&t\in(0,T),\\
w(0,t)=u_t(t,1),\quad&t\in(0,T),\\
w(\delta,0)=f_0(-\delta\tau(0)),\quad&\delta\in(0,1),\\
\begin{cases}
u(t,0)=0\quad&\text{if }\mu_a\in[0,1),\\
\lim_{x\to0^+}a(x)u_x(t,x)=0\quad&\text{if }\mu_a\in[1,2),
\end{cases}&t\in(0,T),\\
u(0,x)=u_0(x),~u_t(0,x)=u_1(x),\quad&x\in[0,1].
\end{cases}
\end{equation}

Consider the Hilbert space 
\[\mathcal{H}=W_a^1(0,1)\times L^2(0,1) \times L^2(0,1)\]
with the scalar product
\begin{equation*}
\begin{split}
\langle(u,v,w),(\tilde{u},\tilde{v},\tilde{w})\rangle_{\mathcal{H}}&=\int_0^1 \Big(v(x)\tilde{v}(x)+a(x)u'(x)\tilde{u}'(x)\Big)dx\\
&\quad+a(1)\beta u(1)\tilde{u}(1)+\mu_1 a(1)\int_0^1w(\delta)\tilde{w}(\delta)d\delta
\end{split}
\end{equation*}
for all $(u,v,w),~(\tilde{u},\tilde{v},\tilde{w})\in \mathcal{H}$.

Define a operator $\mathcal{A}(t)$ in $\mathcal{H}$ by
\begin{equation*}
\mathcal{A}(t)(u,v,w)=\left(v,(a(x)u'(x))',\frac{\delta\tau'(t)-1}{\tau(t)}w_\delta\right)
\end{equation*}
with domain 
\begin{equation*}
\begin{split}
\mathfrak{D}(\mathcal{A}(t))=\Big\{(u,v,w)\in &W_a^2(0,1)\times W_a^1(0,1)\times H^1(0,1):~w(0):=v(1),\\
&\mu_1v(1)+\mu_2w(1)+u'(1)+\beta u(1)=0\Big\}.
\end{split}
\end{equation*}
It is noted that the domain of the operator $\mathcal{A}(t)$ is not dependent on the time $t$, in other words, 
\begin{equation*}
\mathfrak{D}(\mathcal{A}(t))=\mathfrak{D}(\mathcal{A}(0))\quad \forall t>0.
\end{equation*}

It is obvious that problem \eqref{3.2} can be transformed into the following Cauchy problem
\begin{equation}
\label{3.4}
\begin{cases}
\frac{dU(t)}{dt}=\mathcal{A}(t)U(t) \quad \forall t>0,\\
U(0)=U_0,
\end{cases}
\end{equation}
where $U(t)=(u,u_t,w)$ and $U_0=(u_0,u_1,f_0(-\delta\tau(0)))$.

In what follows,  we analyze the well-posedness of  the Cauchy problem \eqref{3.4} based on the semigroup theory \cite{K1976,K1985,P1983}. It is sufficient to prove that the triplet $\{\mathcal{A},\mathcal{H},Y\}$  determines a constant domain system for $\mathcal{A}=\{\mathcal{A}(t)|t\in [0,T]\}$ with some fixed $T>0$ and $Y=\mathcal{A}(0).$ To be a little precise, the following theorem which can be found in \cite[Theorem 1.9]{K1976} and  \cite[Theorem 2.13]{K1985} on existence and uniqueness of solutions is going to be exploited in the present paper.
\begin{theorem}
\label{thm1}
Suppose that
\begin{enumerate}[$(1)$]
  \item $Y=\mathfrak{D}(\mathcal{A}(0))$ is dense subset of $\mathcal{H}$.
  \item $\mathfrak{D}(\mathcal{A}(t))$ is independent of time $t$, that is, $\mathfrak{D}(\mathcal{A}(t))=\mathfrak{D}(\mathcal{A}(0))$ for all $t>0.$
  \item $\mathfrak{D}(\mathcal{A}(t))$ generates a strongly continuous semigroup on $\mathcal{H}$ for all $t\in [0,T]$, and the family $\mathcal{A}=\{\mathcal{A}(t)|t\in [0,T]\}$ is stable with stability constants $C$ and $m$ independent of $t$, that is, the semigroup $(S_t(s))_{s\ge 0}$ generated by $\mathcal{A}(t)$ satisfies $\|S_t(s)U\|_{\mathcal{H}}\le Ce ^{ms}\|U\|_{\mathcal{H}}$ for all $U\in \mathcal{H}$ and $s\ge 0.$
  \item $\frac{d}{dt}\mathcal{A}(t)$ belongs to $L_*^\infty([0,T],B(Y,\mathcal{H}))$, which is the space of equivalent classes of essentially bounded, strongly measurable functions from $[0,T]$ into the set $B(Y,\mathcal{H})$ of bounded linear operators from $Y$ into $\mathcal{H}$.
\end{enumerate}

Then, problem \eqref{3.4} has a unique solution $U\in C([0,T],Y)\cap C^1([0,T],\mathcal{H})$ for all initial data in $Y$.
\end{theorem}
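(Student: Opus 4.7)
Since Theorem \ref{thm1} is cited verbatim from Kato's works \cite{K1976,K1985}, the author will not reproduce a proof; nevertheless, let me outline the architecture so that the role of each hypothesis is transparent. The strategy is the standard time-discretization scheme for nonautonomous linear evolution equations. Partition $[0,T]$ into $N$ equal subintervals with nodes $t_k=kT/N$. On each $[t_k,t_{k+1}]$, freeze the generator at $\mathcal{A}(t_k)$; by hypothesis (3) this operator generates a strongly continuous semigroup $S_{t_k}(\cdot)$ on $\mathcal{H}$, and define the piecewise approximation
\[
U_N(t)=S_{t_k}(t-t_k)U_N(t_k),\quad t\in[t_k,t_{k+1}],\ U_N(0)=U_0.
\]
The uniform stability constants $C$ and $m$ in (3) force $\|U_N(t)\|_{\mathcal{H}}\le Ce^{mT}\|U_0\|_{\mathcal{H}}$ with bounds independent of $N$.

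Next, I would show that $\{U_N\}$ is Cauchy in $C([0,T],\mathcal{H})$ whenever $U_0\in Y$. The comparison between two refinements is carried out by a telescoping argument between successive frozen semigroups, controlled by the strong measurability of $t\mapsto \mathcal{A}(t)U_0$ supplied by hypothesis (4). Passing $N\to\infty$ yields a limit $U\in C([0,T],\mathcal{H})$ satisfying $U(0)=U_0$. The density hypothesis (1) then permits the extension of this construction from $Y$ to all of $\mathcal{H}$, producing the evolution family $U(t,s)$ on $\mathcal{H}$.

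Finally, one upgrades the regularity: show that $U(t)\in Y$ for every $t\in[0,T]$, that $U\in C^1([0,T],\mathcal{H})$, and that $\frac{dU}{dt}(t)=\mathcal{A}(t)U(t)$. The decisive identity is
\[
\mathcal{A}(t_{k+1})U_N(t_{k+1})=\mathcal{A}(t_k)U_N(t_{k+1})+\bigl(\mathcal{A}(t_{k+1})-\mathcal{A}(t_k)\bigr)U_N(t_{k+1}),
\]
in which the second term is bounded uniformly in $N$ because $\frac{d}{dt}\mathcal{A}(t)\in L_*^\infty([0,T],B(Y,\mathcal{H}))$ by hypothesis (4), while the constant-domain assumption (2) guarantees that successive applications of the frozen generators remain meaningful on a common subspace. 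Uniqueness then follows by applying Gronwall's inequality to the $\mathcal{H}$-norm of the difference of two candidate solutions. The main obstacle throughout is precisely this third step: extracting a solution whose values lie in the time-independent domain $Y$ at every time, rather than only a mild solution in $\mathcal{H}$. It is here that all four hypotheses conspire, which is why, in the subsequent verification for problem \eqref{3.4}, each of conditions (1)--(4) must be checked separately for $\mathcal{A}(t)$.
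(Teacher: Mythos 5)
The paper itself offers no proof of this theorem: it is quoted verbatim from Kato (\cite[Theorem 1.9]{K1976}, \cite[Theorem 2.13]{K1985}) and used as a black box, so there is no in-paper argument to compare against. Your decision not to reproduce a full proof is therefore the same choice the author makes, and your sketch of the classical construction --- freezing the generator on a partition, propagating with the frozen semigroups, using the uniform stability constants to get bounds independent of the mesh, passing to the limit for data in $Y$, and then upgrading regularity --- is a faithful outline of Kato's actual argument for constant-domain hyperbolic systems. Two small imprecisions worth noting if you ever flesh this out: first, the Cauchy estimate between two refinements does not rest on ``strong measurability of $t\mapsto\mathcal{A}(t)U_0$'' alone but on the quantitative bound $\|\mathcal{A}(t)-\mathcal{A}(s)\|_{B(Y,\mathcal{H})}\le \int_s^t\|\tfrac{d}{dr}\mathcal{A}(r)\|_{B(Y,\mathcal{H})}\,dr\le M|t-s|$ that hypothesis $(4)$ delivers; second, for $C([0,T],Y)$ and $B(Y,\mathcal{H})$ to make sense one must equip $Y=\mathfrak{D}(\mathcal{A}(0))$ with the graph norm of $\mathcal{A}(0)$, a point Kato is careful about and which is implicit in your outline. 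Neither affects the correctness of the architecture you describe.
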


 Define on the Hilbert space $\mathcal{H}$ the following \emph{time-dependent inner product}
 \begin{equation*}
 \begin{split}
\langle(u,v,w),(\tilde{u},\tilde{v},\tilde{w})\rangle_{t}&=\int_0^1 \Big(v(x)\tilde{v}(x)+a(x)u'(x)\tilde{u}'(x)\Big)dx\\
&\quad+a(1)\beta u(1)\tilde{u}(1)+\mu_1 a(1)\tau(t)\int_0^1w(\delta)\tilde{w}(\delta)d\delta
\end{split}
\end{equation*}
for all $(u,v,w),~(\tilde{u},\tilde{v},\tilde{w})\in \mathcal{H}$. It is obvious that the norm $\|\cdot\|_{t}$ induced by the time-dependent inner product is is equivalent to $\|\cdot\|_{\mathcal{H}}$, that is, 
\[C_1\|(u,v,w)\|_{\mathcal{H}}^2\le \|(u,v,\sqrt{\tau(t)}w)\|_{\mathcal{H}}^2\le C_2\|(u,v,w)\|_{\mathcal{H}}^2\]
for some positive constants $C_1=\min\{\tau_0,1\}$ and $C_2=\max\{\tau_1,1\}.$

Let us introduce a new operator
\[\mathcal{\widetilde{A}}(t)=\mathcal{A}(t)-\iota(t)\mathcal{I}\quad \forall t\ge 0,\]
where $\iota(t)=\frac{\sqrt{1+(\tau'(t))^2}}{2\tau(t)}$ and $\mathcal{I}$ represents the identity operator from $\mathcal{H}$ into itself.

\begin{lemma}\label{lem2.2}
For the following Cauchy problem
\begin{equation}
\label{3.5}
\begin{cases}
\frac{d\widetilde{U}(t)}{dt}=\mathcal{\widetilde{A}}(t)\widetilde{U}(t)\quad \forall t>0,\\
\widetilde{U}(0)=U_0,
\end{cases}
\end{equation}
if $U_0\in \mathcal{H}$, then there admits a unique solution $\widetilde{U}(t)$ satisfying $\widetilde{U}(t)\in C([0,\infty),\mathcal{H})$. In addition, if $U_0\in \mathfrak{D}(\mathcal{\widetilde{A}}(0))$, then $\widetilde{U}(t)\in C([0,\infty),\mathfrak{D}(\mathcal{\widetilde{A}}(0)))\cap C^1([0,\infty),\mathcal{H}).$
\end{lemma}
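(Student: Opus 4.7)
The plan is to apply Theorem \ref{thm1} to the shifted operator $\widetilde{\mathcal{A}}(t)$ after equipping $\mathcal{H}$ with the time-dependent inner product $\langle\cdot,\cdot\rangle_t$. The whole point of the shift by $\iota(t)\mathcal{I}$ with $\iota(t)=\tfrac{\sqrt{1+(\tau'(t))^2}}{2\tau(t)}$ is to absorb the time derivative of the weight $\tau(t)$ that appears in $\|\cdot\|_t$, so that the resulting operator is dissipative with respect to $\langle\cdot,\cdot\rangle_t$ uniformly in $t$. Since the two norms $\|\cdot\|_{\mathcal{H}}$ and $\|\cdot\|_t$ are equivalent (with the explicit constants $C_1,C_2$ already noted), it suffices to carry out the whole Hille--Yosida / Lumer--Phillips analysis inside the time-dependent Hilbert structure.

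First I would check the easy hypotheses of Theorem \ref{thm1}: condition (2) is already observed in the excerpt, and for condition (1) I would note that smooth triples $(u,v,w)$ with $u$ compactly supported away from $0$ (when $\mu_a\in[0,1)$) and appropriate compatibility at $x=1$ are dense in $\mathcal{H}$. Condition (4) reduces to the fact that $\widetilde{\mathcal{A}}(t)-\widetilde{\mathcal{A}}(s)$ acts only on the third component through the factor $\frac{\delta\tau'(t)-1}{\tau(t)}$ and on all three components through $\iota(t)\mathcal{I}$; since $\tau\in W^{2,\infty}([0,T])$ and $\tau\ge\tau_0>0$, the maps $t\mapsto \tau'(t)/\tau(t)$, $t\mapsto 1/\tau(t)$, $t\mapsto \iota(t)$ are Lipschitz, so $\frac{d}{dt}\widetilde{\mathcal{A}}(t)\in L^\infty_*([0,T],B(Y,\mathcal{H}))$.

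The heart of the argument is condition (3): for each fixed $t\ge 0$, $\widetilde{\mathcal{A}}(t)$ is the generator of a $C_0$-semigroup on $(\mathcal{H},\langle\cdot,\cdot\rangle_t)$, with stability constants independent of $t$. I would prove this via Lumer--Phillips. For dissipativity, compute $\langle\widetilde{\mathcal{A}}(t)U,U\rangle_t$ for $U=(u,v,w)\in \mathfrak{D}(\mathcal{A}(0))$: an integration by parts in $x$ using the degenerate boundary conditions at $x=0$ from Proposition \ref{prop2.1} cancels the interior term, leaving a boundary contribution at $x=1$ given by $a(1)v(1)u'(1)+a(1)\beta u(1)v(1)$, which on using the third boundary condition becomes $-\mu_1 a(1) v(1)^2-\mu_2 a(1) v(1)w(1,t)$. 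An integration by parts in $\delta$ on the transport part, together with $w(0)=v(1)$ and the identity $\tau(t)w_t+(1-\delta\tau'(t))w_\delta=0$ translated into the weighted inner product, produces the boundary terms $\tfrac{\mu_1 a(1)}{2}v(1)^2-\tfrac{\mu_1 a(1)(1-\tau'(t))}{2}w(1,t)^2$. Subtracting $\iota(t)\|U\|_t^2$ cancels the residual $\tau'(t)$-weight coming from differentiating $\tau(t)$ in the third slot of $\langle\cdot,\cdot\rangle_t$. One then applies Young's inequality to $|\mu_2|\,|v(1)||w(1,t)|$ and uses hypothesis \eqref{2144} $\mu_1\ge |\mu_2|/\sqrt{1-d}$ together with $\tau'(t)\le d<1$ to conclude that the combined boundary quadratic form is nonpositive; hence $\langle\widetilde{\mathcal{A}}(t)U,U\rangle_t\le 0$.

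For maximal monotonicity I would solve $(\lambda I-\mathcal{A}(t))U=F=(f_1,f_2,f_3)\in\mathcal{H}$ for some $\lambda>0$. Writing $v=\lambda u-f_1$, the transport equation yields $w(\delta)$ explicitly as $w(0)e^{-\lambda\tau(t)\int_0^\delta\cdots}$ plus a forcing integral of $f_3$, so that $w(1)$ becomes a known linear function of $u(1)$ and the data; substituting into the boundary relation $\mu_1 v(1)+\mu_2 w(1)+u'(1)+\beta u(1)=0$ recasts the problem as an elliptic variational problem on $W_a^1(0,1)$ of exactly the form treated in Proposition 2.3, giving a unique solution $u\in W_a^2(0,1)$ via the elliptic estimate \eqref{1729}. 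This establishes surjectivity and, combined with dissipativity, Lumer--Phillips yields that $\widetilde{\mathcal{A}}(t)$ generates a contraction semigroup on $(\mathcal{H},\langle\cdot,\cdot\rangle_t)$; the norm equivalence with constants $C_1,C_2$ converts this into uniform stability bounds for $\|\cdot\|_{\mathcal{H}}$. The subtle point, which I expect to be the main obstacle, is the bookkeeping in the dissipativity calculation: the transport integration by parts interacts with the $t$-dependent weight $\tau(t)$ in the third slot, and one has to verify that the precise choice of $\iota(t)$ exactly annihilates the $\tau'(t)$ contribution so that no sign condition on $\tau'$ beyond $\tau'(t)\le d<1$ is needed. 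Once (1)--(4) of Theorem \ref{thm1} are in place, the stated regularity of $\widetilde U$ follows immediately, giving the lemma.
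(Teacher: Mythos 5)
Your strategy coincides with the paper's: verify Kato's conditions (1)--(4) for $\widetilde{\mathcal{A}}(t)$, prove dissipativity with respect to the time-dependent inner product $\langle\cdot,\cdot\rangle_t$ (with $\iota(t)=\frac{\sqrt{1+(\tau'(t))^2}}{2\tau(t)}$ chosen so that $\iota(t)\tau(t)\ge\frac{|\tau'(t)|}{2}$ absorbs the term $-\tfrac12\mu_1a(1)\tau'(t)\int_0^1w^2\,d\delta$ left over from the integration by parts in $\delta$), establish the range condition by integrating the transport equation \eqref{3.9} explicitly and reducing to a Lax--Milgram problem on $W_a^1(0,1)$, and conclude by Lumer--Phillips. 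The dissipativity computation you describe, including the use of \eqref{2144} and $\tau'\le d<1$ on the boundary quadratic form in $(v(1),w(1))$, is exactly the paper's Claim~1.

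There is, however, one genuine gap: your verification of the stability of the family $\{\widetilde{\mathcal{A}}(t)\}_{t\in[0,T]}$ rests only on the uniform two-sided equivalence $C_1\|U\|_{\mathcal{H}}^2\le\|U\|_t^2\le C_2\|U\|_{\mathcal{H}}^2$. That is not sufficient. Stability in Kato's sense controls \emph{products} $S_{t_k}(s_k)\cdots S_{t_1}(s_1)$ of the frozen-time semigroups, and transporting a $\|\cdot\|_{t_j}$-contraction bound back to $\|\cdot\|_{\mathcal{H}}$ through the equivalence costs a factor $\sqrt{C_2/C_1}$ for \emph{each} factor of the product, which accumulates without bound. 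The paper closes this by proving the one-sided Lipschitz comparison
\begin{equation*}
\frac{\|U\|_t}{\|U\|_s}\le e^{\frac{d}{2\tau_0}|t-s|}\qquad\forall\, t,s\in[0,T],
\end{equation*}
via $\tau(t)=\tau(s)+\tau'(\varsigma)(t-s)$ and $0\le\tau'\le d$, and then invokes Kato's Proposition~1.1, which upgrades ``$\widetilde{\mathcal{A}}(t)$ is dissipative for $\langle\cdot,\cdot\rangle_t$ for each $t$'' plus this norm comparison to stability of the whole family. You need this step; without it the hypothesis (3) of Theorem \ref{thm1} is not actually verified. Two smaller points of execution: the explicit solution of \eqref{3.9} takes a different closed form according to whether $\tau'(t)=0$ or $\tau'(t)\neq0$, and the coercivity of the resulting bilinear form requires checking $\mu_1+\mu_2e^{\frac{\tau(t)}{\tau'(t)}\ln(1-\tau'(t))}+\beta\ge0$ from \eqref{2144} and \eqref{1.3}; moreover the resolvent problem $u-(au')'=f+g$ with the Robin-type condition at $x=1$ is not the problem covered by the elliptic estimate \eqref{1729} (that proposition concerns $-(az')'=0$ and is used only later, in the stability section), so you must build the bilinear form and apply Lax--Milgram directly, as the paper does.
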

\begin{proof}
We are going to complete this proof by verifying  $(1)-(4)$ in Theorem \ref{thm1} for the operator $\mathcal{\widetilde{A}}(t)$. 

It is obvious to verify that $(1)$ and $(2)$ in Theorem \ref{thm1} for the operator $\mathcal{\widetilde{A}}(t)$ due to \[\mathfrak{D}(\mathcal{\widetilde{A}}(t))=\mathfrak{D}(\mathcal{A}(t))=\mathfrak{D}(\mathcal{A}(0)).\] The rest of this proof is to verify $(3)$ and $(4)$ in Theorem \ref{thm1} for the operator $\mathcal{\widetilde{A}}(t)$ by the following two steps.

\textbf{Step 1. Verification of $(3)$ in Theorem \ref{thm1}.}

It is sufficient to prove that the following three claims hold for the operator $\mathcal{\widetilde{A}}(t)$.

\textbf{Claim 1.} The operator  $\mathcal{\widetilde{A}}(t)$ is dissipative in $\mathcal{H}$.

For any $U=(u,v,w)\in \mathfrak{D}(\mathcal{\widetilde{A}}(t))$, that is, $U=(u,v,w)\in \mathfrak{D}(\mathcal{A}(t))$, it follows from integrations by parts that 
\begin{equation*}
\begin{split}
&\langle \mathcal{A}(t)U,U\rangle_{t}\\
&=\int_0^1 \Big((a(x)u'(x))'v(x)+a(x)u'(x)v'(x)\Big)dx+a(1)\beta v(1)u(1)\\
&\quad+\mu_1 a(1)\int_0^1(\delta\tau'(t)-1)w(\delta)w_\delta(\delta)d\delta\\
&=a(1)v(1)(u'(1)+\beta u(1))+\mu_1 a(1)\int_0^1(\delta\tau'(t)-1)w(\delta)w_\delta(\delta)d\delta\\
&=-\mu_1a(1)v^2(1)-\mu_2a(1)v(1)w(1)-\frac{1}{2}\mu_1 a(1)w^2(1)(1-\tau'(t))\\
&\quad+\frac{1}{2}\mu_1 a(1)v^2(1)-\frac{1}{2}\mu_1 a(1)\tau'(t)\int_0^1w^2(\delta)d\delta.
\end{split}
\end{equation*}
Therefore,
\begin{equation*}
\begin{split}
&\langle \mathcal{\widetilde{A}}(t)U,U\rangle_{t}=\langle \mathcal{A}(t)U,U\rangle_{t}-\iota(t)\langle U,U\rangle_{t}\\
&\le -\mu_1a(1)v^2(1)-\mu_2a(1)v(1)w(1)-\frac{1}{2}\mu_1 a(1)w^2(1)(1-\tau'(t))+\frac{1}{2}\mu_1a(1)v^2(1)\\
&\le \Big[-a(1)\Big(\mu_1-\frac{1}{2}\mu_1-\frac{|\mu_2|}{2\sqrt{1-d}}\Big)\Big]v^2(1)+\Big[a(1)\Big(\frac{|\mu_2|\sqrt{1-d}}{2}-\frac{1}{2}\mu_1(1-d)\Big)\Big]w^2(1)\le 0.
\end{split}
\end{equation*}
Here, we have used Cauchy's inequality, \eqref{1.3} and \eqref{2144}.
Therefore, the operator $\mathcal{\widetilde{A}}(t)$ is dissipative.

\textbf{Claim 2.} $R(I-\mathcal{\widetilde{A}}(t))=\mathcal{H}$, where $R(I-\mathcal{\widetilde{A}}(t))$ is the range of operator $I-\mathcal{\widetilde{A}}(t)$.

We need to show that for any $G=(f,g,h)\in \mathcal{H}$, there exists a solution $U=(u,v,w)\in \mathfrak{D}(\mathcal{A}(t))$ of the equation 
\[(I-\mathcal{A}(t))U=G,\]
that is 
\begin{equation*}
v= u-f, 
\end{equation*}
\begin{equation*}
\begin{split}
u-(a(x)u')'=f+g, 
\end{split}
\end{equation*}
\begin{equation}
\label{3.9}w+\frac{1-\delta\tau'(t)}{\tau(t)}w_\delta=h.
\end{equation}
In what follows, we will discuss this in two cases.

\textbf{ Case 1 }$\tau'(t)=0$.\\
Consider the bilinear form $\mathcal{B}:W_a^1(0,1)\times W_a^1(0,1)\to \mathbb{R}$ given by
\[\mathcal{B}(u,\varphi)=\int_0^1\Big(u\varphi+a(x)u'\varphi'\Big)dx+a(1)u(1)\varphi(1)(\mu_1+\mu_2e^{-\tau(t)}+\beta)\]
and the linear form $\mathcal{L}:W_a^1(0,1)\to \mathbb{R}$ 
\[\mathcal{L}\varphi=\int_0^1(f+g)\varphi dx+a(1)\varphi(1)\Big(\mu_1f(1)+\mu_2e^{-\tau(t)}f(1)-\mu_2e^{-\tau(t)}\tau(t)\int_0^1e^{\delta\tau(t)}h(\delta) d\delta\Big).\]
For $(u,\varphi)\in W_a^1(0,1)\times W_a^1(0,1)$, it follows \eqref{1538} that 
\begin{equation}
\label{1548}
\begin{split}
|\mathcal{B}(u,\varphi)|&\le \|u\|_{1,a}\|\varphi\|_{1,a}+a(1)\max\Big\{2,\frac{1}{a(1)}\Big\}(\mu_1+\mu_2e^{-\tau(t)}+\beta)\|u\|_{1,a}\|\varphi\|_{1,a}\\
&\le \Big[1+a(1)\max\Big\{2,\frac{1}{a(1)}\Big\}(\mu_1+\mu_2+\beta)\Big]\|u\|_{1,a}\|\varphi\|_{1,a},
\end{split}
\end{equation}
thus $\mathcal{B}$ is continuous.  Based on  \eqref{2144} and \eqref{1.3}, one has $\mu_1+\mu_2e^{-\tau(t)}+\beta\ge 0$, thus $\mathcal{B}$ is coercive on $W_a^1(0,1)\times W_a^1(0,1)$.  $\mathcal{L}$ is a continuous linear functional. By the Lax-Milgram theorem, there exists a unique solution $u\in W_a^1(0,1)$ of the variational problem
\begin{equation}
\label{3.10}
\mathcal{B}(u,\varphi)=\mathcal{L}\varphi\quad \forall \varphi\in W_a^1(0,1).
\end{equation}
Let us set $v=u-f$, then $v\in W_a^1(0,1)$. It follows from \eqref{3.9} that 
\begin{equation}
\label{0824}
w(\delta)=e^{-\tau(t)}\Big(v(1)+\tau(t)\int_0^\delta e^{\rho\tau(t)}h(\rho) d\rho\Big).
\end{equation}

Since $C_c^\infty(0,1)\subset W_a^1(0,1)$, one has 
\[\int_0^1\Big(u\varphi+a(x)u'\varphi'\Big)dx=\int_0^1(f+g)\varphi dx\quad \forall \varphi\in C_c^\infty(0,1).\]
Therefore, $u-(a(x)u')'=f+g$ holds in the sense of distributions. This indicates $u\in W_a^2(0,1)$ and 
\[u-(a(x)u')'=f+g\quad a.e. \text{ in }(0,1).\]
Using an integration by parts, one gets
\begin{equation}
\label{0854}
\int_0^1 u\varphi dx+\int_0^1a(x)u'\varphi'dx-a(1)u'(1)\varphi(1)=\int_0^1(f+g)\varphi dx\quad \forall \varphi\in W_a^1(0,1).
\end{equation}
This together with \eqref{3.10} yields
\begin{equation}
\label{0836}
\begin{split}
u'(1)&=\mu_1f(1)+\mu_2e^{-\tau(t)}f(1)-\mu_2e^{-\tau(t)}\tau(t)\int_0^1e^{\delta\tau(t)}h(\delta) d\delta-\mu_1u(1)-\mu_2e^{-\tau(t)}u(1)-\beta u(1)\\
&=-\mu_1(u(1)-f(1))-\mu_2e^{-\tau(t)}\Big(u(1)-f(1)+\tau(t)\int_0^1e^{\delta\tau(t)}h(\delta) d\delta\Big)-\beta u(1).
\end{split}
\end{equation}
Here, we have used $a(1)>0$ and the function $\varphi$ defined by $\varphi(x)=x$ for all $x\in (0,1)$ belongs to $W_a^1(0,1)$. Recall  $v=u-f$ and set $\delta=1$ in \eqref{0824}, then \eqref{0836} yields
\begin{equation}
\label{2114}
\mu_1v(1)+\mu_2w(1)+u'(1)+\beta u(1)=0.
\end{equation}

\textbf{Case 2} $\tau'(t)\neq0$.\\
Consider the bilinear form $\widetilde{\mathcal{B}}:W_a^1(0,1)\times W_a^1(0,1)\to \mathbb{R}$ given by
\[\widetilde{\mathcal{B}}(u,\varphi)=\int_0^1\Big(u\varphi+a(x)u'\varphi'\Big)dx+a(1)u(1)\varphi(1)\Big(\mu_1+\mu_2e^{\frac{\tau(t)}{\tau'(t)}\ln(1-\tau'(t))}+\beta\Big)\]
and the linear form $\tilde{\mathcal{L}}:W_a^1(0,1)\to \mathbb{R}$
\begin{equation*}
\begin{split}
\widetilde{\mathcal{L}}\varphi&=\int_0^1(f+g)\varphi dx+a(1)\varphi(1)\Big(\mu_1f(1)+\mu_2e^{\frac{\tau(t)}{\tau'(t)}\ln(1-\tau'(t))}f(1)\\
&\quad-\mu_2e^{\frac{\tau(t)}{\tau'(t)}\ln(1-\tau'(t))}\tau(t)\int_0^1e^{-\frac{\tau(t)\ln(1-\tau'(t)\delta)}{\tau'(t)}}\frac{h(\delta)}{1-\delta\tau'(t)} d\delta\Big).
\end{split}
\end{equation*}
Similar to \eqref{1548}, one has that $\widetilde{\mathcal{B}}$ is  continuous on $W_a^1(0,1)\times W_a^1(0,1)$.  Based on  \eqref{2144} and \eqref{1.3}, one has $\mu_1+\mu_2e^{\frac{\tau(t)}{\tau'(t)}\ln(1-\tau'(t))}+\beta\ge 0$, thus $\widetilde{\mathcal{B}}$ is coercive on $W_a^1(0,1)\times W_a^1(0,1)$.   $\widetilde{\mathcal{L}}$ is a continuous linear functional. By the Lax-Milgram theorem, there exists a unique solution $u\in W_a^1(0,1)$ of the variational problem
\begin{equation}
\label{d3.10}
\widetilde{\mathcal{B}}(u,\varphi)=\widetilde{\mathcal{L}}\varphi\quad \forall \varphi\in W_a^1(0,1).
\end{equation}
Let us set $v=u-f$, then $v\in W_a^1(0,1)$. It follows from \eqref{3.9} that 
\begin{equation}
\label{d0824}
w(\delta)=e^{\frac{\tau(t)}{\tau'(t)}\ln(1-\delta\tau'(t))}\Big(v(1)+\tau(t)\int_0^\delta e^{-\frac{\tau(t)\ln(1-\tau'(t)\rho)}{\tau'(t)}}\frac{h(\rho)}{1-\rho\tau'(t)} d\rho\Big).
\end{equation}

Obviously, \eqref{0854} still holds, wihch together with \eqref{d3.10}  yields 
\begin{equation}
\label{d0836}
\begin{split}
u'(1)&=-\mu_1(u(1)-f(1))-\beta u(1)\\
&\quad-\mu_2e^{\frac{\tau(t)}{\tau'(t)}\ln(1-\tau'(t))}\Big(u(1)-f(1)+\tau(t)\int_0^1e^{-\frac{\tau(t)\ln(1-\tau'(t)\delta)}{\tau'(t)}}\frac{h(\delta)}{1-\delta\tau'(t)} d\delta\Big).
\end{split}
\end{equation}
Here, we have used $a(1)>0$ and the function $\varphi$ defined by $\varphi(x)=x$ for all $x\in (0,1)$ belongs to $W_a^1(0,1)$. Recall  $v=u-f$ and set $\delta=1$ in \eqref{d0824}, then \eqref{d0836} also yields \eqref{2114}.

Recall the boundedness of $\iota(t)$, Claim 2 is obtained directly.

So far, the operator $\mathcal{\widetilde{A}}$ generates a $C_0$ semigroup of contractions on $\mathcal{H}$ by the Lumer-Phillips theorem \cite[Theorem 4.3]{P1983}.

\textbf{Claim 3.} For any $U=(u,v,w)\in \mathcal{H}$, 
\[\frac{\|U\|_t}{\|U\|_s}\le e^{\frac{d}{2\tau_0}|t-s|}\quad \forall t,~s\in[0,T].\]

 For $0\le s\le  t\le T$, we get
\begin{equation}
\label{3.17}
\begin{split}
\|U\|_t^2-\|U\|_s^2e^{\frac{d}{\tau_0}|t-s|}
&=
\Big(1-e^{\frac{d}{\tau_0}|t-s|}\Big)\Big[\int_0^1 \Big(v^2(x)+a(x)(u'(x))^2\Big)dx+a(1)\beta u^2(1)\Big]\\
&\quad+\Big(\tau(t)-\tau(s)e^{\frac{d}{\tau_0}|t-s|}\Big)\mu_1 a(1)\int_0^1w^2d\delta.
\end{split}
\end{equation}
It is not difficult to find that $\tau(t)$ can be represented as 
\[\tau(t)=\tau(s)+\tau'(\varsigma)(t-s)\quad\forall \varsigma\in (s,t).\]
Then, 
\[\frac{\tau(t)}{\tau(s)}\le 1+\frac{d}{\tau_0}(t-s)\le e^{\frac{d}{\tau_0}|t-s|},\]
which yields $\tau(t)-\tau(s)e^{\frac{d}{\tau_0}|t-s|}\le0.$ Note that $1-e^{\frac{d}{\tau_0}|t-s|}\le0$, together with \eqref{3.17}, we prove this claim by \cite[Proposition 1.1]{K1976}.

In conclusion, the family $\mathcal{\widetilde{A}}=\{\mathcal{\widetilde{A}}(t)|t\in [0,T]\}$ is stable in $\mathcal{H}.$

\textbf{Step 2. Verification of $(3)$ in Theorem \ref{thm1}.}

A direct computation implies 
\begin{equation*}
\frac{d}{dt}\mathcal{A}(t)(u,v,w)=\left(0,0,\frac{\delta[\tau(t)\tau''(t)-(\tau'(t))^2]\tau'(t)}{\tau^2(t)}w_\delta\right)
\end{equation*}
Since $\frac{\delta[\tau(t)\tau''(t)-(\tau'(t))^2]\tau'(t)}{\tau^2(t)}$ is bounded, we have $\frac{d}{dt}\mathcal{A}(t)$ is bounded. $\iota'(t)$ is bounded, thus 
\[\frac{d}{dt}\mathcal{\widetilde{A}}(t)=\frac{d}{dt}\mathcal{A}(t)-\iota'(t)\mathcal{I}\in L_*^\infty([0,T],B(\mathfrak{D}(\mathcal{A}(0)),\mathcal{H})).\]

This completes this proof.
\end{proof}

\begin{theorem}\label{thm2.3}
For any initial data $U_0\in \mathcal{H}$, there admits a unique solution $U(t)$ satisfying $U(t)\in C([0,\infty),\mathcal{H})$ for the Cauchy problem \eqref{3.4}. In addition, if $U_0\in \mathfrak{D}(\mathcal{A}(0))$, then $U(t)\in C([0,\infty),\mathfrak{D}(\mathcal{A}(0)))\cap C^1([0,\infty),\mathcal{H}).$
\end{theorem}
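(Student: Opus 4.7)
The plan is to deduce Theorem \ref{thm2.3} from Lemma \ref{lem2.2} by a straightforward exponential rescaling that absorbs the zeroth-order perturbation $\iota(t)\mathcal{I}$. First I would set
\[I(t):=\int_0^t\iota(s)\,ds,\]
and, given the unique solution $\widetilde{U}$ of \eqref{3.5} produced by Lemma \ref{lem2.2}, define
\[U(t):=e^{I(t)}\widetilde{U}(t).\]
Since $\iota(t)=\frac{\sqrt{1+(\tau'(t))^2}}{2\tau(t)}$ is continuous and uniformly bounded on every $[0,T]$ by \eqref{1.3} and the assumption $\tau\in W^{2,\infty}([0,T])$, the function $I$ is well-defined and of class $C^1$, and the scalar factor $e^{I(t)}$ is bounded away from $0$ and $\infty$ on compact time intervals.

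Next I would check that $U$ satisfies \eqref{3.4}. A direct computation, using linearity of $\mathcal{\widetilde{A}}(t)$ and the defining identity $\mathcal{\widetilde{A}}(t)=\mathcal{A}(t)-\iota(t)\mathcal{I}$, gives
\[\frac{dU}{dt}=\iota(t)e^{I(t)}\widetilde{U}(t)+e^{I(t)}\mathcal{\widetilde{A}}(t)\widetilde{U}(t)=\iota(t)U(t)+\mathcal{\widetilde{A}}(t)U(t)=\mathcal{A}(t)U(t),\]
and since $I(0)=0$ we have $U(0)=\widetilde{U}(0)=U_0$. The regularity of $U$ inherits directly from $\widetilde{U}$: multiplication by the smooth, positive scalar $e^{I(t)}$ preserves the space $C([0,\infty),\mathcal{H})$, and since the boundary constraints defining $\mathfrak{D}(\mathcal{A}(0))=\mathfrak{D}(\mathcal{\widetilde{A}}(0))$ are linear and homogeneous, they are stable under scalar multiplication; hence $U\in C([0,\infty),\mathfrak{D}(\mathcal{A}(0)))\cap C^1([0,\infty),\mathcal{H})$ whenever $U_0\in\mathfrak{D}(\mathcal{A}(0))$.

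Uniqueness is handled by running the transformation backwards: if $U_1,U_2$ are two solutions of \eqref{3.4} with the same initial datum, then $\widetilde{U}_i:=e^{-I(t)}U_i$ ($i=1,2$) both solve \eqref{3.5} with initial datum $U_0$, so by Lemma \ref{lem2.2} we have $\widetilde{U}_1=\widetilde{U}_2$, and therefore $U_1=U_2$.

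I do not expect any substantive obstacle in this step; the heavy lifting—verifying Kato's four hypotheses for the time-dependent generator, in particular the maximal dissipativity of $\mathcal{\widetilde{A}}(t)$ and the stability estimate for the time-dependent norms $\|\cdot\|_t$—has already been absorbed into Lemma \ref{lem2.2}. The only book-keeping point is to confirm that $\iota(t)$ is locally bounded so that $I(t)$ and $e^{\pm I(t)}$ behave well, which is immediate from \eqref{1.3} together with $\tau\in W^{2,\infty}([0,T])$.
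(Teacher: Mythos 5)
Your proposal is correct and follows essentially the same route as the paper: both pass to the shifted generator $\mathcal{\widetilde{A}}(t)=\mathcal{A}(t)-\iota(t)\mathcal{I}$, invoke Lemma \ref{lem2.2}, and recover $U(t)=e^{\int_0^t\iota(s)ds}\widetilde{U}(t)$ by the same differentiation check. Your explicit treatment of uniqueness via the inverse rescaling is a small but welcome addition that the paper leaves implicit.
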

\begin{proof}
If $U_0\in \mathfrak{D}(\mathcal{\widetilde{A}}(0))$, then there exists a unique solution $\widetilde{U}(t)\in C([0,\infty),\mathfrak{D}(\mathcal{\widetilde{A}}(0)))\cap C^1([0,\infty),\mathcal{H})$ for problem \eqref{3.5} by Lemma \ref{lem2.2}. Thus, 
\[U(t)=e^{\int_0^t\iota(s)ds}\widetilde{U}(t)\]
is a solution of problem \eqref{3.4}.

In fact, differentiate the above equality, then
\begin{equation*}
\begin{split}
\frac{d}{dt}U(t)&=\iota(t)e^{\int_0^t\iota(s)ds}\widetilde{U}(t)+e^{\int_0^t\iota(s)ds}\frac{d}{dt}\widetilde{U}(t)\\
&=e^{\int_0^t\iota(s)ds}[\iota(t)+\mathcal{\widetilde{A}}(t)]\widetilde{U}(t)\\
&=\mathcal{A}(t)e^{\int_0^t\iota(s)ds}\widetilde{U}(t)=\mathcal{A}(t)U(t).
\end{split}
\end{equation*}
This completes the proof. 
\end{proof}

\begin{corollary}
If $(u_0,u_1)\in W_a^1(0,1)\times L^2(0,1)$, then there exists a unique mild solution for problem \eqref{1.1} 
\[u\in C^1([0,\infty);L^2(0,1))\cap C([0,\infty);W_a^1(0,1)).\]
If $(u_0,u_1)\in W_a^2(0,1)\times W_a^1(0,1)$, then there exists a unique classical solution for problem \eqref{1.1} 
\[u\in C^2([0,\infty);L^2(0,1))\cap C^1([0,\infty);W_a^1(0,1))\cap C^2([0,\infty);W_a^2(0,1)).\]
\end{corollary}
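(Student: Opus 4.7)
The plan is to deduce the corollary as a direct consequence of Theorem~\ref{thm2.3} applied to the Cauchy problem \eqref{3.4}, by reading off the regularity of the first component $u$ of $U(t)=(u,u_t,w)$ and checking that this $u$ is indeed a solution of \eqref{1.1}. First I would fix initial data $(u_0,u_1)$ of the stated regularity and a history $f_0$ chosen in a compatible function space (so that $\delta\mapsto f_0(-\delta\tau(0))$ lies in $L^2(0,1)$ for the mild case, and in $H^1(0,1)$ with the right matching conditions for the classical case), form the initial vector $U_0=(u_0,u_1,f_0(-\delta\tau(0)))$, and invoke Theorem~\ref{thm2.3} to produce a unique solution $U(t)$ of \eqref{3.4}.

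For the first statement, $U_0\in\mathcal{H}=W_a^1(0,1)\times L^2(0,1)\times L^2(0,1)$, so Theorem~\ref{thm2.3} yields $U\in C([0,\infty),\mathcal{H})$. Projecting onto the first two coordinates gives $u\in C([0,\infty);W_a^1(0,1))$ and $u_t\in C([0,\infty);L^2(0,1))$, i.e.\ $u\in C^1([0,\infty);L^2(0,1))\cap C([0,\infty);W_a^1(0,1))$, which is the mild regularity claimed. For the second statement, the stronger hypothesis $(u_0,u_1)\in W_a^2(0,1)\times W_a^1(0,1)$ together with the compatibility conditions $f_0(0)=u_1(1)$ and $\mu_1 u_1(1)+\mu_2 f_0(-\tau(0))+u_0'(1)+\beta u_0(1)=0$ built into $\mathfrak{D}(\mathcal{A}(0))$ puts $U_0$ in that domain, so Theorem~\ref{thm2.3} gives $U\in C([0,\infty);\mathfrak{D}(\mathcal{A}(0)))\cap C^1([0,\infty);\mathcal{H})$. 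Reading the components, this delivers $u\in C([0,\infty);W_a^2(0,1))$, $u_t\in C([0,\infty);W_a^1(0,1))\cap C^1([0,\infty);L^2(0,1))$, and since $u_{tt}=(a(x)u_x)_x\in C([0,\infty);L^2(0,1))$ one obtains $u\in C^2([0,\infty);L^2(0,1))$, which is the classical regularity.

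To close the argument I would verify that $u$ actually satisfies \eqref{1.1}: the first component of $\frac{dU}{dt}=\mathcal{A}(t)U$ gives $u_t=v$ and the second gives $v_t=(a(x)u_x)_x$, so $u_{tt}-(a(x)u_x)_x=0$; the transport equation $\tau(t)w_t+(1-\delta\tau'(t))w_\delta=0$ together with $w(0,t)=u_t(t,1)$ forces (by the characteristics argument already encapsulated in the formulas \eqref{0824}, \eqref{d0824}) the identity $w(1,t)=u_t(t-\tau(t),1)$, so the boundary condition in $\mathfrak{D}(\mathcal{A}(t))$ becomes precisely $\mu_1 u_t(t,1)+\mu_2 u_t(t-\tau(t),1)+u_x(t,1)+\beta u(t,1)=0$; the boundary condition at $x=0$ is built into $W_a^1(0,1)$ or $W_a^2(0,1)$ according to the value of $\mu_a$. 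Uniqueness is inherited directly from Theorem~\ref{thm2.3}. The only delicate point, and the one I would treat most carefully, is the rigorous identification $w(1,t)=u_t(t-\tau(t),1)$ through the transport equation, since this is what couples the abstract auxiliary variable back to the delay structure of the original problem.
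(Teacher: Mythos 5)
Your proposal is correct and follows exactly the route the paper intends: the corollary is stated without proof as an immediate consequence of Theorem \ref{thm2.3}, read through the equivalence between \eqref{1.1} and the abstract Cauchy problem \eqref{3.4}, which is precisely what you spell out (including the characteristics identification $w(\delta,t)=u_t(t-\delta\tau(t),1)$). Your explicit mention of the compatibility conditions on $f_0$ needed for $U_0\in\mathfrak{D}(\mathcal{A}(0))$ is a point the paper's statement silently omits, and is a worthwhile addition rather than a deviation.
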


\section{Exponential stability}\label{sec4}
Let us define the energy of problem  \eqref{1.1} as
\begin{equation}
\label{4.1}
E_u(t):=\frac12\Big[\int_0^1\Big(u_t^2+a(x)u_x^2\Big)dx+\beta a(1)u^2(t,1)+\mu_1a(1)\tau(t)\int_0^1u_t^2(t-\delta\tau(t),1)d\delta\Big].
\end{equation}

\begin{lemma}\label{lem4.1}
For $(u_0,u_1)\in W_a^2(0,1)\times W_a^1(0,1)$, the energy of problem  \eqref{1.1} is nonincreasing with respect to $t$, and 
\begin{equation}
\label{4.3}
E_u'(t)\le -C_3[a(1)u_t^2(t,1)+a(1)u_t^2(t-\tau(t),1)],
\end{equation}
with \[C_3=\min\Big\{\frac{\mu_1}{2}-\frac{|\mu_2|}{\sqrt{1-d}},\frac{\mu_1}{2} (1-d)-\frac{|\mu_2|}{2}\sqrt{1-d}\Big\}.\]
\end{lemma}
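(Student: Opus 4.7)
The plan is to differentiate $E_u(t)$ directly, exploit the PDE and the boundary condition at $x=1$ to cancel the leading boundary flux, rewrite the delay integral by means of the transport equation from \eqref{3.2}, and finally bound the mixed term by Young's inequality, invoking \eqref{2144} to make the coefficients non-negative.

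First I would differentiate the classical piece $\tfrac{1}{2}\int_0^1(u_t^2 + a(x)u_x^2)\,dx$, substitute $u_{tt} = (a(x)u_x)_x$, and integrate by parts in $x$. This produces the boundary term $a(1)u_x(t,1)u_t(t,1)$ at $x=1$ plus a limit $\lim_{x\to 0^+} a(x)u_x(t,x)u_t(t,x)$ at $x=0$. The latter vanishes in both degeneracy regimes by Proposition \ref{prop2.1}(2): if $\mu_a\in[0,1)$ then $u(t,0)=0$ hence $u_t(t,0)=0$, and the limit is zero with $\varphi = u_t$; if $\mu_a\in[1,2)$ then $\lim_{x\to 0^+} a(x)u_x(t,x) = 0$ directly. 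Adding the derivative $\beta a(1)u(t,1)u_t(t,1)$ coming from the boundary energy term and applying the boundary condition $u_x(t,1)+\beta u(t,1) = -\mu_1 u_t(t,1) - \mu_2 u_t(t-\tau(t),1)$ collapses the contribution of the first two pieces of $E_u$ to
\[
-\mu_1 a(1)u_t^2(t,1) - \mu_2 a(1)u_t(t,1)u_t(t-\tau(t),1).
\]

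Next I would handle the delay integral by setting $w(\delta,t) = u_t(t-\delta\tau(t),1)$ and differentiating $\tau(t)\int_0^1 w^2\,d\delta$. The chain rule gives $\tau'(t)\int_0^1 w^2\,d\delta + \tau(t)\int_0^1 2ww_t\,d\delta$. Substituting the transport identity $\tau(t)w_t = -(1-\delta\tau'(t))w_\delta$ from \eqref{3.2} and integrating by parts in $\delta$, the two $\tau'(t)\int_0^1 w^2\,d\delta$ pieces cancel, leaving only the boundary contribution $w^2(0,t) - (1-\tau'(t))w^2(1,t)$, i.e. $u_t^2(t,1) - (1-\tau'(t))u_t^2(t-\tau(t),1)$. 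Multiplying by $\tfrac{1}{2}\mu_1 a(1)$ and collecting everything produces
\[
E_u'(t) = -\tfrac{\mu_1 a(1)}{2} u_t^2(t,1) - \mu_2 a(1)u_t(t,1)u_t(t-\tau(t),1) - \tfrac{\mu_1 a(1)}{2}(1-\tau'(t))u_t^2(t-\tau(t),1).
\]

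Finally, I would estimate the cross term by Young's inequality with weight $\sqrt{1-d}$,
\[
|u_t(t,1)u_t(t-\tau(t),1)| \le \tfrac{1}{2\sqrt{1-d}}u_t^2(t,1) + \tfrac{\sqrt{1-d}}{2}u_t^2(t-\tau(t),1),
\]
and bound $-(1-\tau'(t)) \le -(1-d)$ from \eqref{1.3}. Assumption \eqref{2144} then guarantees that both resulting coefficients are non-negative, giving the stated bound with the constant $C_3$.

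The main obstacle is not any one algebraic step but the vanishing of the degenerate boundary flux at $x=0$, which is not automatic and forces the split into the two regimes $\mu_a\in[0,1)$ and $\mu_a\in[1,2)$ via Proposition \ref{prop2.1}(2). A secondary subtlety is handling the delay integral through the transport formulation \eqref{3.2} rather than by a direct chain rule on $u_t^2(t-\delta\tau(t),1)$; this is precisely what produces the clean cancellation of the $\tau'(t)\int_0^1 w^2\,d\delta$ term and is the reason for introducing the auxiliary variable $w$ in the first place.
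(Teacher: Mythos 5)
Your proposal is correct and follows essentially the same route as the paper: differentiate $E_u$, integrate by parts in $x$, eliminate $u_x(t,1)+\beta u(t,1)$ via the boundary condition, integrate by parts in $\delta$ to reduce the delay integral to its endpoint values, and close with a weighted Young inequality under \eqref{2144} and \eqref{1.3}. The only (harmless) differences are that you route the delay computation through the transport equation for $w$ from \eqref{3.2} instead of the paper's direct substitution of $\delta$-derivatives, and you make the vanishing of the flux at $x=0$ explicit via Proposition \ref{prop2.1}; your Young weight is in fact slightly sharper than the paper's but still yields \eqref{4.3} with the stated $C_3$.
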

\begin{proof}
Differentiate \eqref{4.1} with respect to $t$, after integrations by parts, one has
\begin{equation}
\label{4.4}
\begin{split}
E_u'(t)&=\int_0^1(u_tu_{tt}+a(x)u_xu_{xt})dx+\beta a(1)u(t,1)u_t(t,1)+\frac{\mu_1}{2}a(1)\tau'(t)\int_0^1u_t^2(t-\delta\tau(t),1)d\delta\\
&\quad+\mu_1 a(1)\tau(t)\int_0^1u_{tt}(t-\delta\tau(t),1)u_t(t-\delta\tau(t),1)(1-\delta\tau'(t))d\delta\\
&=a(1)u_x(t,1)u_t(t,1)+\beta a(1)u(t,1)u_t(t,1)+\frac{\mu_1}{2}a(1)\tau'(t)\int_0^1u_t^2(t-\delta\tau(t),1)d\delta\\
&\quad+\mu_1 a(1)\tau(t)\int_0^1u_{tt}(t-\delta\tau(t),1)u_t(t-\delta\tau(t),1)(1-\delta\tau'(t))d\delta.
\end{split}
\end{equation}
 Note that 
 \[u_t(t-\delta\tau(t),1)=-\frac{1}{\tau(t)}\partial_\delta u(t-\delta\tau(t),1),\]
 \[u_{tt}(t-\delta\tau(t),1)=\frac{1}{\tau^2(t)}\partial_{\delta\delta} u(t-\delta\tau(t),1).\]
 Inserting these into \eqref{4.4}, using suitable integrations by parts and then applying Cauchy's inequality, one obtains
 \begin{equation*}
\begin{split}
E_u'(t)&=a(1)u_x(t,1)u_t(t,1)+\beta a(1)u(t,1)u_t(t,1)+\frac{\mu_1}{2}a(1)\tau'(t)\int_0^1u_t^2(t-\delta\tau(t),1)d\delta\\
&\quad-\frac{\mu_1} {2}a(1)\tau'(t)\int_0^1u_t^2(t-\delta\tau(t),1)d\delta-\frac{\mu_1}{2}a(1)(1-\tau'(t))u_t^2(t-\tau(t),1)+\frac{\mu_1} {2}a(1)u_t^2(t,1)\\
&=-\Big(\mu_1-\frac{\mu_1} {2}\Big)a(1)u_t^2(t,1)-\mu_2 a(1)u_t(t,1)u_t(t-\tau(t),1)-\frac{\mu_1} {2}a(1)(1-\tau'(t))u_t^2(t-\tau(t),1)\\
&\le -\Big(\mu_1-\frac{|\mu_2|}{\sqrt{1-d}}-\frac{\mu_1} {2}\Big)a(1)u_t^2(t,1)-\Big[\frac{\mu_1} {2} (1-d)-\frac{|\mu_2|}{2}\sqrt{1-d}\Big]a(1)u_t^2(t-\tau(t),1).
\end{split}
\end{equation*}
 Here, the boundary condition $\mu_1u_t(t,1)+\mu_2u_t(t-\tau(t),1)+u_x(t,1)+\beta u(t,1)=0$ are used.  This completes this proof due to \eqref{2144}.
\end{proof}

 Let us introduce the Lyapunov functional
 \begin{equation}
 \label{4.6}
\tilde{E}_u(t):=E_u(t)+\varepsilon\Big[\int_0^1\Big(2xu_xu_t+\frac{\mu_a}{2}uu_t\Big)dx+\mu_1 a(1)\tau(t)\int_0^1e^{-2\delta\tau(t)}u_t^2(t-\delta\tau(t),1)d\delta\Big],
\end{equation}
where $\varepsilon$ is a positive and sufficiently small constant that we will choose hereinafter.
\begin{lemma}\label{lem4.2}
 The functional $\tilde{E}_u(t)$ is equivalent to the energy $E_u(t)$, that is, there exist two positive constants $C_4$ and $C_5$ such that
\[C_4E_u(t)\le \tilde{E}_u(t)\le C_5 E_u(t).\]
\end{lemma}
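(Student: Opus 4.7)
The plan is to show that $|\tilde{E}_u(t) - E_u(t)| \le \varepsilon K \, E_u(t)$ for some constant $K > 0$ that depends only on $a$, $\beta$, $\mu_1$, $\mu_a$ (and is independent of $t$ and $\varepsilon$). Once this estimate is established, choosing $\varepsilon$ small enough so that $\varepsilon K < 1$ immediately yields the two-sided comparison with $C_4 = 1 - \varepsilon K > 0$ and $C_5 = 1 + \varepsilon K$. This is the same template one uses whenever a Lyapunov perturbation is added to a base energy; the work is entirely in producing the three pointwise-in-$t$ bounds for the three extra terms in \eqref{4.6}.

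I would treat the three terms of $\tilde{E}_u(t) - E_u(t)$ separately. For the exponentially weighted delay term, since $e^{-2\delta\tau(t)} \le 1$ on $[0,1]$, it is immediately dominated by the unweighted delay integral appearing in $E_u(t)$, hence by $2 E_u(t)$. For the term $\int_0^1 \tfrac{\mu_a}{2} u u_t \, dx$, Cauchy's inequality gives $\tfrac{\mu_a}{2}|uu_t| \le \tfrac{\mu_a}{4}(u^2 + u_t^2)$, and Proposition 4.3 (i.e.\ \eqref{2021}) controls $\int_0^1 u^2 \, dx$ by $2 u^2(t,1) + C_a' \int_0^1 a(x) u_x^2 \, dx$, both of which are absorbed by $E_u(t)$ up to a factor depending on $a(1)$ and $\beta$. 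For the first term, Cauchy's inequality gives $|2x u_x u_t| \le x^2 u_x^2 + u_t^2$, and the issue reduces to controlling $\int_0^1 x^2 u_x^2 \, dx$ by $\int_0^1 a(x) u_x^2 \, dx$.

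The main obstacle is the pointwise inequality $x^2 \le a(x)/a(1)$ on $(0,1]$, which must be extracted from the structural assumption $\mu_a < 2$ in \eqref{1.2}. I would differentiate $x^{-\mu_a} a(x)$: using $|x a'(x)| \le \mu_a a(x)$, the derivative $x^{-\mu_a - 1}\big(x a'(x) - \mu_a a(x)\big)$ is non-positive on $(0,1]$, so $x^{-\mu_a} a(x)$ is non-increasing, yielding $a(x) \ge a(1) x^{\mu_a}$. Since $\mu_a < 2$ and $x \in (0,1]$, this gives $x^2 = x^{\mu_a} \cdot x^{2-\mu_a} \le x^{\mu_a} \le a(x)/a(1)$, and the first term is controlled by $\tfrac{1}{a(1)} \int_0^1 a(x) u_x^2 \, dx + \int_0^1 u_t^2 \, dx$, both bounded by a multiple of $E_u(t)$. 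Assembling the three estimates produces a constant $K$ (explicit in $1/a(1)$, $1/\beta$, $C_a'$, $\mu_a$, $\mu_1$) such that $|\tilde{E}_u(t) - E_u(t)| \le \varepsilon K E_u(t)$, and the lemma follows.
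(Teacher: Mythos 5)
Your proposal is correct and follows essentially the same route as the paper: Cauchy's inequality on each of the three perturbation terms, the pointwise bound $x^2 a(1)\le a(x)$ deduced from \eqref{1.2}(2) for the term $\int_0^1 2xu_xu_t\,dx$, the Hardy-type estimate \eqref{2021} for the term $\int_0^1 \frac{\mu_a}{2}uu_t\,dx$, and $e^{-2\delta\tau(t)}\le 1$ for the delay term, followed by choosing $\varepsilon$ small. Your explicit derivation of $a(x)\ge a(1)x^{\mu_a}\ge a(1)x^2$ from the monotonicity of $x^{-\mu_a}a(x)$ is a welcome elaboration of a step the paper leaves implicit.
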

\begin{proof}
Using Cauchy's inquality and \eqref{1.2}(2), it follows that
\begin{equation*}
\Big|\int_0^1 2xu_xu_tdx\Big|\le \frac{1}{a(1)}\int_0^1a(x)u_x^2dx+\int_0^1u_t^2dx.
\end{equation*}
Using Cauchy's inequality again and \eqref{2021}, one gets
\begin{equation*}
\Big|\int_0^1\frac{\mu_a}{2}uu_tdx\Big|\le \frac{\mu_a}{2}u^2(1)+\frac{\mu_a}{4}C_a'\int_0^1a(x)(u'(x))^2dx+\frac{\mu_a}{4}\int_0^2u_t^2dx.
\end{equation*}
Recall the definition of $E_u(t)$ in \eqref{4.1}, we can prove this lemma if we chose $\varepsilon$ small enough such that 
 \[C_4=1-2\varepsilon\max\Big\{1+\frac{\mu_a}{4},\frac{1}{a(1)}+\frac{\mu_a}{4}C_a',\frac{\mu_a}{2\beta a(1)}\Big\}>0,\]
 \[C_5=1+2\varepsilon\max\Big\{1+\frac{\mu_a}{4},\frac{1}{a(1)}+\frac{\mu_a}{4}C_a',\frac{\mu_a}{2\beta a(1)}\Big\}.\]
\end{proof}

\begin{lemma}\label{lem4.3}
For $(u_0,u_1)\in W_a^2(0,1)\times W_a^1(0,1)$, the solution for  problem  \eqref{1.1} satisfies 
\begin{equation}
\label{4.14}
\begin{split}
&\frac{d}{dt}\Big(\int_0^12xu_xu_tdx+\int_0^1\frac{\mu_a}{2}uu_tdx\Big)\\
&\quad\le-\frac{2-\mu_a}{2}\Big[\int_0^1(u_t^2+a(x)u_x^2)dx+\beta a(1)u^2(t,1)\Big]\\
&\qquad+\Big(1+\frac 52a(1)\mu_1^2\Big)u_t^2(t,1)+\frac 52a(1)\mu_2^2u_t^2(t-\tau(t),1)\\
&\qquad+a(1)\Big[\beta(\beta-\mu_a+1)+\Big(2\beta-\frac{\mu_a}{2}\Big)^2\Big]u^2(t,1).
\end{split}
\end{equation}
\end{lemma}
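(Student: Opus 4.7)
The plan is to differentiate the two integrals $\int_0^1 2xu_xu_t\,dx$ and $\int_0^1\frac{\mu_a}{2}uu_t\,dx$ separately along the solution, substitute the PDE $u_{tt}=(a(x)u_x)_x$ to turn second time derivatives into spatial derivatives, and then integrate by parts. Every contact with the boundary at $x=1$ will yield a factor $u_x(t,1)$ that I will remove at the very end via the feedback $u_x(t,1)=-\mu_1u_t(t,1)-\mu_2u_t(t-\tau(t),1)-\beta u(t,1)$ together with Young's inequality. Every contact with the degenerate endpoint $x=0$ will produce a limit that vanishes thanks to Proposition~\ref{prop2.1}\,(2).

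For the first integral I would write $2xu_xu_{tt}=2xu_x(au_x)_x=\frac{x}{a}\big((au_x)^2\big)_x$ and integrate by parts: the $x=1$ contribution is $a(1)u_x^2(t,1)$, the $x=0$ contribution equals $\lim_{x\to0^+}xa(x)u_x^2=0$ by Proposition~\ref{prop2.1}\,(2), and the bulk piece is $-\int_0^1(a(x)-xa'(x))u_x^2\,dx$. In parallel, $\int_0^1 2xu_{xt}u_t\,dx=\int_0^1 x(u_t^2)_x\,dx=u_t^2(t,1)-\int_0^1 u_t^2\,dx$. Condition~\eqref{1.2}\,(2) gives $\int_0^1 xa'(x)u_x^2\,dx\le\mu_a\int_0^1 a(x)u_x^2\,dx$, so altogether
\[\frac{d}{dt}\int_0^1 2xu_xu_t\,dx\le u_t^2(t,1)-\int_0^1 u_t^2\,dx+a(1)u_x^2(t,1)-(1-\mu_a)\int_0^1 a(x)u_x^2\,dx.\]
For the second integral, one integration by parts (the $x=0$ limit $\lim_{x\to0}a(x)u(t,x)u_x(t,x)=0$ again follows from Proposition~\ref{prop2.1}\,(2) with $\varphi=u$, using the boundary condition $u(t,0)=0$ in the weak case) gives
\[\frac{d}{dt}\int_0^1\tfrac{\mu_a}{2}uu_t\,dx=\tfrac{\mu_a}{2}\int_0^1 u_t^2\,dx+\tfrac{\mu_a}{2}a(1)u(t,1)u_x(t,1)-\tfrac{\mu_a}{2}\int_0^1 a(x)u_x^2\,dx.\]
Summing the two identities, the bulk contributions collapse into $-\tfrac{2-\mu_a}{2}\int_0^1(u_t^2+a(x)u_x^2)\,dx$, which produces the first bracket of the target inequality (after re-labelling a $-\tfrac{2-\mu_a}{2}\beta a(1)u^2(t,1)$ piece that I will supply from the boundary estimate below).

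What remains are the boundary quantities $u_t^2(t,1)+a(1)u_x^2(t,1)+\tfrac{\mu_a}{2}a(1)u(t,1)u_x(t,1)$. Substituting the feedback for $u_x(t,1)$ and expanding, $a(1)u_x^2(t,1)$ yields the three squares $\mu_1^2u_t^2(t,1)$, $\mu_2^2u_t^2(t-\tau(t),1)$, $\beta^2u^2(t,1)$ plus the cross terms $2\mu_1\mu_2u_t(t,1)u_t(t-\tau(t),1)$ and $2\beta(\mu_1u_t(t,1)+\mu_2u_t(t-\tau(t),1))u(t,1)$; the $\frac{\mu_a}{2}a(1)uu_x$ piece contributes exactly $-\tfrac{\mu_a}{2}a(1)[\mu_1u_t(t,1)+\mu_2u_t(t-\tau(t),1)]u(t,1)-\tfrac{\mu_a\beta}{2}a(1)u^2(t,1)$. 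I would then apply Young's inequality once to the delay cross term, as $2\mu_1\mu_2u_t(t,1)u_t(t-\tau(t),1)\le\mu_1^2u_t^2(t,1)+\mu_2^2u_t^2(t-\tau(t),1)$, and twice more, with weights $\tfrac12$ on the $u_t^2$ side, to each cross term of the form $(2\beta-\mu_a/2)\mu_j u(t,1)u_t(\cdot,1)$. Collecting coefficients gives $\tfrac{5}{2}\mu_1^2$ on $u_t^2(t,1)$, $\tfrac{5}{2}\mu_2^2$ on $u_t^2(t-\tau(t),1)$, and the constant $\beta^2-\tfrac{\mu_a\beta}{2}+(2\beta-\mu_a/2)^2$ on $u^2(t,1)$; this last expression is precisely $-\tfrac{2-\mu_a}{2}\beta+\beta(\beta-\mu_a+1)+(2\beta-\mu_a/2)^2$, so moving the $-\tfrac{2-\mu_a}{2}\beta a(1)u^2(t,1)$ contribution into the bracket completes the target inequality~\eqref{4.14}.

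The only real difficulty is bookkeeping: choosing the Young weights so that the $\mu_1^2$ and $\mu_2^2$ coefficients coalesce to exactly $\tfrac52$ while simultaneously producing the opaque residual $\beta(\beta-\mu_a+1)+(2\beta-\mu_a/2)^2$ on $u^2(t,1)$. The splitting $(2\beta-\mu_a/2)=2\beta+(-\mu_a/2)$ already fixes the weight, after which the algebraic identity above is routine to check. All remaining steps — the boundary vanishing at $x=0$, the use of $|xa'(x)|\le\mu_a a(x)$, and the substitution of the feedback — are direct applications of Proposition~\ref{prop2.1} and condition~\eqref{1.2}.
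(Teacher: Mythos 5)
Your proposal is correct and follows essentially the same route as the paper: differentiate the two functionals, integrate by parts using Proposition \ref{prop2.1} to kill the degenerate endpoint contributions and $x|a'(x)|\le\mu_a a(x)$ for the bulk terms, substitute the feedback law for $u_x(t,1)$, and apply Young's inequality with exactly the same weights (factor $1$ on the delay cross term, factor $\tfrac12$ on the two $u(t,1)$ cross terms), which reproduces the coefficients $\tfrac52\mu_1^2$, $\tfrac52\mu_2^2$ and $\beta(\beta-\mu_a+1)+(2\beta-\mu_a/2)^2$. The only cosmetic differences are that you apply the bound on $xa'(x)$ before summing the two identities rather than after, and that the vanishing of $xu_t^2$ at $x=0$ actually comes from Proposition \ref{prop2.1}(1) rather than (2).
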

\begin{proof}
Using \eqref{1.1} and suitable integrations by parts, it follows that 
\begin{equation}
\label{4.7}
\begin{split}
\frac{d}{dt}\int_0^12xu_xu_tdx=u_t^2(t,1)-\int_0^1u_t^2dx+a(1)u_x^2(t,1)-\int_0^1(a(x)-xa'(x))u_x^2dx,
\end{split}
\end{equation}
\begin{equation}
\label{4.8}
\begin{split}
\frac{d}{dt}\int_0^1\frac{\mu_a}{2}uu_tdx=\frac{\mu_a}{2} \int_0^1u_t^2dx+\frac{\mu_a}{2}a(1)u_x(t,1)u(t,1)-\frac{\mu_a}{2}\int_0^1a(x)u_x^2dx.
\end{split}
\end{equation}
Here, note that $W_a^2(0,1)\subset V_a^2(0,1)$ and $W_a^1(0,1)\subset V_a^1(0,1)$,  we have used $xu_t^2(t,x)|_{x=0}=0$ and $(xa(x)u_x^2(t,x))|_{x=0}=0$ according to $u\in W_a^2(0,1)$, $u_t\in W_a^1(0,1)$ and Proposition \ref{prop2.1}.

Keeping the boundary condition $\mu_1u_t(t,1)+\mu_2u_t(t-\tau(t),1)+u_x(t,1)+\beta u(t,1)=0$ and $(A+B+C)^2=A^2+B^2+C^2-2AB-2AC-2BC$ in mind, combining \eqref{4.7} with \eqref{4.8}, then
\begin{equation}
\label{4.9}
\begin{split}
&\frac{d}{dt}\Big(\int_0^12xu_xu_tdx+\int_0^1\frac{\mu_a}{2}uu_tdx\Big)\\
&\quad=-(2-\mu_a)\int_0^1\frac12u_t^2dx-\int_0^1\frac12[2(a(x)-xa'(x))+\mu_a]u_x^2dx\\
&\qquad+[1+a(1)\mu_1^2]u_t^2(t,1)+a(1)\mu_2^2u_t^2(t-\tau(t),1)+\Big(\beta-\frac{\mu_a}{2}\Big)a(1)\beta u^2(t,1)\\
&\qquad+2a(1)\mu_1\mu_2u_t(t,1)u_t(t-\tau(t),1)+\Big(2\beta-\frac{\mu_a}{2}\Big)a(1)\mu_1u_t(t,1)u(t,1)\\
&\qquad+\Big(2\beta-\frac{\mu_a}{2}\Big)a(1)\mu_2u_t(t-\tau(t),1)u(t,1)\\
&\quad\le-\frac{2-\mu_a}{2}\Big[\int_0^1(u_t^2+a(x)u_x^2)dx+\beta a(1)u^2(t,1)\Big]\\
&\qquad+[1+a(1)\mu_1^2]u_t^2(t,1)+a(1)\mu_2^2u_t^2(t-\tau(t),1)+a(1)\beta(\beta-\mu_a+1)u^2(t,1)\\
&\qquad+2a(1)\mu_1\mu_2u_t(t,1)u_t(t-\tau(t),1)+\Big(2\beta-\frac{\mu_a}{2}\Big)a(1)\mu_1u_t(t,1)u(t,1)\\
&\qquad+\Big(2\beta-\frac{\mu_a}{2}\Big)a(1)\mu_2u_t(t-\tau(t),1)u(t,1).
\end{split}
\end{equation}
Here, we have used $(2-\mu_a)a(x)\le 2[a(x)-xa'(x)]+a(x)\mu_a$.

By Cauchy's inequality, it follows that 
\begin{equation}
\label{4.10}
\Big(2\beta-\frac{\mu_a}{2}\Big)a(1)\mu_1u_t(t,1)u(t,1)\le \frac12a(1)\mu_1^2u_t^2(t,1)+\frac12\Big(2\beta-\frac{\mu_a}{2}\Big)^2a(1)u^2(t,1),
\end{equation}
\begin{equation}
\label{4.11}
\Big(2\beta-\frac{\mu_a}{2}\Big)a(1)\mu_2u_t(t-\tau(t),1)u(t,1)\le \frac12a(1)\mu_2^2u_t^2(t-\tau(t),1)+\frac12\Big(2\beta-\frac{\mu_a}{2}\Big)^2a(1)u^2(t,1),
\end{equation}
\begin{equation}
\label{4.12}
2a(1)\mu_1\mu_2u_t(t,1)u_t(t-\tau(t),1)\le a(1)\mu_1^2u_t^2(t,1)+a(1)\mu_2^2u_t^2(t-\tau(t),1).
\end{equation}
Inserting \eqref{4.10}-\eqref{4.12} into \eqref{4.9}, we get \eqref{4.14}.
\end{proof}

\begin{lemma}\label{lem4.4}
For $(u_0,u_1)\in W_a^2(0,1)\times W_a^1(0,1)$, one has
 \begin{equation*}
\begin{split}
&\frac{d}{dt}\Big(\mu_1 a(1)\tau(t)\int_0^1e^{-2\delta\tau(t)}u_t^2(t-\delta\tau(t),1)\Big)\\
&\quad\le -2\mu_1 a(1)\tau(t)\int_0^1e^{-2\delta\tau(t)}u_t^2(t-\delta\tau(t),1)+\mu_1 a(1)u_t^2(t,1)d\delta.
\end{split}
\end{equation*}
\end{lemma}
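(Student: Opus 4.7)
The plan is to work with the auxiliary variable $w(\delta,t):=u_t(t-\delta\tau(t),1)$ introduced in Section \ref{sec3}, so that the quantity to be differentiated becomes $\mu_1 a(1)\tau(t)\int_0^1 e^{-2\delta\tau(t)}w^2(\delta,t)\,d\delta$. Recall that $w$ satisfies the transport-type identity $\tau(t)w_t+(1-\delta\tau'(t))w_\delta=0$ together with the boundary value $w(0,t)=u_t(t,1)$.

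First I would apply the product rule and differentiate under the integral sign, generating three contributions: one from the prefactor $\tau'(t)$, one from the $t$-dependence of the weight $e^{-2\delta\tau(t)}$ (which brings down $-2\delta\tau'(t)$), and one from $\partial_t(w^2)=2ww_t$. In this last contribution I would substitute $\tau(t)\partial_t(w^2)=-(1-\delta\tau'(t))\partial_\delta(w^2)$ from the transport equation, so that the remaining $\delta$-derivative can be handled by integration by parts in $\delta$.

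Next, an integration by parts applied to $\int_0^1(1-\delta\tau'(t))e^{-2\delta\tau(t)}\partial_\delta(w^2)\,d\delta$ yields boundary terms at $\delta=0$ (which contributes $-u_t^2(t,1)$ via $w(0,t)=u_t(t,1)$) and $\delta=1$ (which contributes $(1-\tau'(t))e^{-2\tau(t)}w^2(1,t)$), plus the bulk integral against $\partial_\delta\!\bigl[(1-\delta\tau'(t))e^{-2\delta\tau(t)}\bigr]=-e^{-2\delta\tau(t)}[\tau'(t)+2\tau(t)(1-\delta\tau'(t))]$. After collecting all pieces, the terms proportional to $\tau'(t)$ cancel algebraically, leaving exactly
\[
\frac{d}{dt}\!\Big(\mu_1 a(1)\tau(t)\!\int_0^1\! e^{-2\delta\tau(t)}w^2 d\delta\Big)=-2\mu_1 a(1)\tau(t)\!\int_0^1\! e^{-2\delta\tau(t)}w^2 d\delta-\mu_1 a(1)(1-\tau'(t))e^{-2\tau(t)}w^2(1,t)+\mu_1 a(1)u_t^2(t,1).
\]
Since $\tau'(t)\le d<1$ by \eqref{1.3}, the middle term is nonpositive and may be dropped, giving the claimed inequality.

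The main obstacle is the bookkeeping of $\tau'(t)$-terms: several contributions appear from the product rule, from differentiating the exponential weight, and from integration by parts, and one must verify that they cancel exactly so that only the dissipation $-2\tau(t)\int e^{-2\delta\tau(t)}w^2 d\delta$, the negative boundary term at $\delta=1$, and the source $u_t^2(t,1)$ from $\delta=0$ survive. No additional regularity issue arises, since $(u_0,u_1)\in W_a^2(0,1)\times W_a^1(0,1)$ together with Theorem \ref{thm2.3} guarantees that $w$ is smooth enough for the integration by parts in $\delta$ to be justified.
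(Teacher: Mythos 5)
Your computation is correct and is exactly the argument the paper has in mind: the paper omits the details and simply refers to the proof of Lemma 3.2 in Nicaise--Valein--Fridman \cite{N2011}, which proceeds by the same differentiation, substitution of the transport identity for $w$, and integration by parts in $\delta$, with the nonpositive boundary term at $\delta=1$ discarded using $\tau'(t)\le d<1$. Your bookkeeping of the $\tau'(t)$-terms and their exact cancellation checks out.
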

\begin{proof}
Following the proof of \cite[Lemma 3.2.]{N2011}, we can prove this lemma  by changing $\mathcal{E}(t)$ in \cite[Lemma 3.2.]{N2011} to $\mu_1 a(1)\tau(t)\int_0^1e^{-2\delta\tau(t)}u_t^2(t-\delta\tau(t),1).$ Here, we omit the details.
\end{proof}

\begin{theorem}\label{thm4.5}
Let  \eqref{1.2} be fulfilled. For $(u_0,u_1)\in W_a^1(0,1)\times L^2(0,1)$, the solution for problem \eqref{1.1} satisfies the uniform exponential decay
\begin{equation}
\label{4.15}
E_u(t)\le E_u(0)e^{1-\frac{t}{\tilde{M}}}, \quad \forall t\in[\tilde{M},\infty),
\end{equation}
where $\tilde{M}>0$ is given in \eqref{2104} and is independent of $(u_0,u_1)$.
\end{theorem}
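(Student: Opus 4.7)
The plan is to apply Komornik's integral inequality (Theorem~\ref{thmin}) to $E_u$: once one has $\int_t^{\infty} E_u(s)\,ds \le \tilde{M}\,E_u(t)$ for every $t\ge 0$, the decay \eqref{4.15} follows at once. I would first prove the estimate for strong initial data $(u_0,u_1)\in W_a^2(0,1)\times W_a^1(0,1)$, for which Lemmas~\ref{lem4.1}--\ref{lem4.4} hold pointwise, and then extend it to the mild-solution class $W_a^1(0,1)\times L^2(0,1)$ by density using the continuous dependence furnished by Theorem~\ref{thm2.3}.

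The core computation is to differentiate the modified energy $\tilde{E}_u(t)$ of \eqref{4.6}. Lemma~\ref{lem4.1} yields the dissipation $-C_3 a(1)[u_t^2(t,1)+u_t^2(t-\tau(t),1)]$. Multiplying Lemmas~\ref{lem4.3} and \ref{lem4.4} by $\varepsilon$ adds the negative contributions $-\varepsilon\frac{2-\mu_a}{2}\bigl[\int_0^1(u_t^2+a u_x^2)\,dx+\beta a(1)u^2(t,1)\bigr]$ and $-2\varepsilon\mu_1 a(1)\tau(t)\int_0^1 e^{-2\delta\tau(t)}u_t^2(t-\delta\tau(t),1)\,d\delta$, together with positive boundary quantities in $u_t^2(t,1)$, $u_t^2(t-\tau(t),1)$, and $u^2(t,1)$. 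Fixing $\varepsilon$ small, the $\varepsilon$-scaled boundary $u_t^2$ contributions are absorbed by the $-C_3 a(1)$ dissipation from Lemma~\ref{lem4.1}, and the pointwise bound $e^{-2\delta\tau(t)}\ge e^{-2\tau_1}$ upgrades the delay term of Lemma~\ref{lem4.4} into a multiple of the delayed piece of $E_u$. Combined with the leading spatial negative term from Lemma~\ref{lem4.3}, this produces a pointwise inequality of the form
\[\tilde{E}_u'(t) \le -\varepsilon c_0\,E_u(t) + \varepsilon K\,u^2(t,1),\]
where $c_0,K>0$ depend only on the structural constants $\mu_a,\mu_1,\mu_2,\beta,a(1),d,\tau_0,\tau_1$.

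I would then integrate this inequality on $[t,T]$. Combining the pointwise control $\beta a(1)u^2(s,1)\le 2 E_u(s)$ implicit in the definition of $E_u$ with the negative $-\varepsilon c_0\int_t^T E_u(s)\,ds$, and tuning $\varepsilon$ and the other constants so that the resulting net coefficient in front of $\int_t^T E_u(s)\,ds$ remains strictly negative, gives
\[c_1\int_t^T E_u(s)\,ds\le \tilde{E}_u(t)-\tilde{E}_u(T),\]
for some $c_1>0$. Applying Lemma~\ref{lem4.2} to replace $\tilde{E}_u(t)$ by a constant multiple of $E_u(t)$, letting $T\to\infty$, and exploiting the monotonicity of $E_u$ from Lemma~\ref{lem4.1} produces $\int_t^\infty E_u(s)\,ds\le \tilde{M}\,E_u(t)$ with an explicit $\tilde{M}>0$, and Theorem~\ref{thmin} then closes out \eqref{4.15}.

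The main obstacle is clearly the trace term $u^2(t,1)$ generated in \eqref{4.14}: it is neither dissipated by $E_u'$ nor controlled by the boundary $u_t^2$ integrals furnished by Lemma~\ref{lem4.1}, and the only usable estimate is the global $u^2(t,1)\le 2 E_u(t)/(\beta a(1))$. Guaranteeing that, after this substitution and the integration step, the residual coefficient $c_1$ remains strictly positive demands a delicate simultaneous tuning of $\varepsilon$, the constants $C_3,C_4,C_5$, and the exponential weight $e^{-2\tau_1}$ from the Lyapunov functional \eqref{4.6}; this is the quantitative balance from which the explicit value of $\tilde{M}$ is read off, and it is the one step where the whole argument can fail if the parameters are not chosen carefully.
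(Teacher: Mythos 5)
There is a genuine gap at precisely the point you flag as ``the main obstacle,'' and your proposed resolution does not work. After assembling Lemmas \ref{lem4.1}, \ref{lem4.3} and \ref{lem4.4} you arrive, as the paper does, at
\[
\tilde{E}_u'(t)\le -\varepsilon c_0\,E_u(t)+\varepsilon K\,u^2(t,1),
\qquad c_0=\min\{2-\mu_a,e^{-2\tau_1}\},\quad K=a(1)\Bigl[\beta(\beta-\mu_a+1)+\bigl(2\beta-\tfrac{\mu_a}{2}\bigr)^2\Bigr].
\]
You then propose to absorb the trace term using $a(1)u^2(t,1)\le \frac{2}{\beta}E_u(t)$ and to ``tune $\varepsilon$'' so that the net coefficient of $E_u$ stays negative. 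But both the good term and the bad term carry the same factor $\varepsilon$, so shrinking $\varepsilon$ does not change their ratio: the substitution gives $\tilde{E}_u'(t)\le \varepsilon\bigl(-c_0+\frac{2K}{\beta a(1)}\bigr)E_u(t)$, and the sign of the bracket is fixed by the structural constants alone. It is in general positive: already for $\mu_a=0$ one has $\frac{2K}{\beta a(1)}=2(5\beta+1)\ge 2>\min\{2,e^{-2\tau_1}\}$ for every $\beta>0$, so the crude absorption fails for \emph{all} admissible parameters, not just for unlucky ones. No choice of $\varepsilon$, $C_3$, $C_4$, $C_5$ or of the weight $e^{-2\delta\tau(t)}$ rescues this, because none of these knobs decouples the coefficient of $u^2(t,1)$ from the coefficient of $E_u(t)$.

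The missing idea is the paper's treatment of $\int_S^T a(1)u^2(t,1)\,dt$ by duality with the degenerate elliptic problem \eqref{1736}: one sets $\lambda=u(t,1)$, lets $z$ be the corresponding solution, multiplies the wave equation by $z$ and integrates by parts over $(S,T)\times(0,1)$ to obtain the identity \eqref{4.18}. The elliptic estimates \eqref{1729} then bound $\|z_t\|_{L^2}$ and $z(t,1)$ by $u_t(t,1)$ and $u(t,1)$ respectively, so that, after Young's inequality with a \emph{free} parameter $\eta$, the trace integral is controlled by $\eta\int_S^T E_u(t)\,dt$ plus multiples of $E_u(S)-E_u(T)$ and $E_u(S)+E_u(T)$ coming from the boundary dissipation \eqref{4.3}. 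Since $\eta$ can be chosen small independently of $\varepsilon$ and of $C_7$, the term $\varepsilon C_7\bigl(1+\frac{2}{\beta^3}\bigr)\eta\int_S^T E_u$ in \eqref{4.23} is genuinely absorbable, and the integral inequality $\int_S^T E_u\le \tilde{M}E_u(S)$ follows with the explicit $\tilde{M}$ of \eqref{2104}. Everything else in your outline (the Lyapunov functional, the equivalence Lemma \ref{lem4.2}, the application of Theorem \ref{thmin}, and the density argument for weak data) matches the paper; it is only this elliptic-duality step that must replace your pointwise bound on $u^2(t,1)$.
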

\begin{proof}
Let $(u_0,u_1)\in W_a^2(0,1)\times W_a^1(0,1)$. For the solution $u$, combining \eqref{4.6} with Lemmas \ref{lem4.1}, \ref{lem4.3} and \ref{lem4.4}, it follows that 
\begin{equation}
\label{4.16}
\begin{split}
\frac{d}{dt}\tilde{E}_u(t)&\le -C_4[a(1)u_t^2(t,1)+a(1)u_t^2(t-\tau(t),1)]\\
&\quad -\varepsilon\frac{2-\mu_a}{2}\Big[\int_0^1(u_t^2+a(x)u_x^2)dx+\beta a(1)u^2(t,1)\Big]+\varepsilon\Big(1+\frac 52a(1)\mu_1^2\Big)u_t^2(t,1)\\
&\quad+\varepsilon\frac 52a(1)\mu_2^2u_t^2(t-\tau(t),1)+\varepsilon a(1)\Big[\beta(\beta-\mu_a+1)+\Big(2\beta-\frac{\mu_a}{2}\Big)^2\Big]u^2(t,1)\\
&\quad -\varepsilon2\mu_1 a(1)\tau(t)\int_0^1e^{-2\delta\tau(t)}u_t^2(t-\delta\tau(t),1)d\delta+\varepsilon\mu_1 a(1)u_t^2(t,1)\\
&\le -C_6(u_t^2(t,1)+u_t^2(t-\tau(t),1))-\varepsilon\min\{2-\mu_a,e^{-2\tau_1}\}E_u(t)\\
&\quad +\varepsilon a(1)\Big[\beta(\beta-\mu_a+1)+\Big(2\beta-\frac{\mu_a}{2}\Big)^2\Big]u^2(t,1)\\
&\le -\varepsilon\min\{2-\mu_a,e^{-2\tau_1}\}E_u(t)+\varepsilon C_7a(1)u^2(t,1),
\end{split}
\end{equation}
where $C_6$ and $C_7$ are positive constants.

Integrating \eqref{4.16} from $S$ to $T$ with respect to $t$, one arrives at
\begin{equation}
\label{4.17}
\begin{split}
\varepsilon\min\{2-\mu_a,e^{-2\tau_1}\}\int_S^TE_u(t)dt\le \tilde{E}_u(S)-\tilde{E}_u(T)+\varepsilon C_7\int_S^Ta(1)u^2(t,1)dt.
\end{split}
\end{equation}

We are now in a position to estimate $\int_S^Ta(1)u^2(t,1)dt$. Set $\lambda=u(t,1)$ and denote
by $z$ the solution of the degenerate elliptic problem \eqref{1736}. Multiplying the first equation in \eqref{1.1} by $z$, integrating the resulting equation over $(S,T)\times(0,1)$, and then using integrations by parts, it follows that 
\begin{equation}
\label{4.18}
\begin{split}
\int_S^Ta(1)u^2(t,1)dt&=\int_S^T\int_0^1u_tz_tdxdt-\int_0^1u_tzdx\Big|_S^T\\
&\qquad-\int_S^Ta(1)[\mu_1u_t(t,1)+\mu_2u_t(t-\tau(t),1)]z(t,1)dt.
\end{split}
\end{equation}
It follows from H\"older's inequality, Young's inequality with $\eta>0$ and \eqref{4.3} that 
\begin{equation}
\label{4.19}
\begin{split}
\int_S^T\int_0^1u_tz_tdxdt&\le \int_S^T \frac\eta 2\|u_t\|_{L^2(0,1)}^2dt+\int_S^T\frac{1}{2\eta}\|z_t\|_{L^2(0,1)}^2dt\\
&\le \eta \int_S^T E_u(t)dt+\frac{a(1)}{\beta \alpha_a}\frac{1}{2\eta}\int_S^Tu_t^2(t,1)dt
\\
&\le \eta \int_S^T E_u(t)dt+\frac{1}{\beta \alpha_a}\frac{1}{2\eta}\frac{1}{C_3}[E_u(S)-E_u(T)],
\end{split}
\end{equation}
\begin{equation}
\label{4.20}
\Big|\int_0^1u_tzdx\Big|\le \frac{1}{\beta\sqrt{\alpha_a}}\Big(\int_0^1\frac{u_t^2}{2}dx+\frac{\beta a(1)}{2}u^2(t,1)\Big)\le \frac{1}{\beta\sqrt{\alpha_a}}E_u(t).
\end{equation}
Here, we have used 
\[\|z_t\|_{L^2(0,1)}^2\le \frac{a(1)}{\beta \alpha_a}u_t^2(t,1),\]
\begin{equation}
\label{1030}
z^2(t,1)\le \frac{1}{\beta^2}u^2(t,1)\le \frac{2}{\beta^3a(1)}E_u(t)
\end{equation}
based on \eqref{1729}.

Using Cauchy's inequality, \eqref{1030} and \eqref{4.3}, one has
\begin{equation}
\label{4.21}
\begin{split}
&\Big|\int_S^Ta(1)[\mu_1u_t(t,1)+\mu_2u_t(t-\tau(t),1)]z(t,1)dt\Big|\\
&\quad\le\frac{1}{2\eta}\int_S^T [a(1)u_t^2(t,1)+a(1)u_t^{2}(t-\tau(t),1)]dt+\frac{2\eta}{\beta^3}\int_S^T E_u(t)dt\\
&\quad\le\frac{1}{2\eta C_3}\int_S^T -E_u'(t)dt+\frac{2\eta}{\beta^3}\int_S^T E_u(t)dt
\\
&\quad\le\frac{1}{2\eta C_3}(E_u(S)-E_u(T))+\frac{2\eta}{\beta^3}\int_S^T E_u(t)dt.
\end{split}
\end{equation}
Combining \eqref{4.19}-\eqref{4.21} with \eqref{4.18}, then
\begin{equation*}
\begin{split}
\int_S^Ta(1)u^2(t,1)dt&\le \Big(1+\frac{2}{\beta^3}\Big)\eta\int_S^T E_u(t)dt+\frac{1}{\beta \alpha_a}\frac{1}{2\eta}\frac{1}{C_3}[E(S)-E_u(T)]\\
&\qquad+\frac{1}{\beta\sqrt{\alpha_a}}[E(S)+E_u(T)]+\frac{1}{2\eta C_3}[E(S)-E_u(T)],
\end{split}
\end{equation*}
which together with \eqref{4.17} yields 
\begin{equation}
\label{4.23}
\begin{split}
&\varepsilon\min\{2-\mu_a,e^{-2\tau_1}\}\int_S^TE_u(t)dt\\
&\quad\le \tilde{E}_u(S)-\tilde{E}_u(T)+\varepsilon C_7\Big(1+\frac{2}{\beta^3}\Big)\eta\int_S^T E_u(t)dt+\varepsilon C_7\frac{1}{\beta \alpha_a}\frac{1}{2\eta}\frac{1}{C_3}[E_u(S)-E_u(T)]\\
&\qquad+\frac{\varepsilon C_7}{\beta\sqrt{\alpha_a}}[E_u(S)+E_u(T)]+\frac{\varepsilon C_7}{2\eta C_3}[E_u(S)-E_u(T)].
\end{split}
\end{equation}
Recall Lemma \ref{lem4.2}, let us choose \[\eta=\frac{\min\{2-\mu_a,e^{-2\tau_1}\}}{2C_7\Big(1+\frac{2}{\beta^3}\Big)},\]
then it follows from \eqref{4.23} that 
\begin{equation*}
\int_S^TE_u(t)dt\le \tilde{M}E_u(S),
\end{equation*}
where
\begin{equation}
\label{2104}
\begin{split}
\tilde{M}&=\frac{2}{\varepsilon\min\{2-\mu_a,e^{-2\tau_1}\}}\Big[C_5+\varepsilon \frac{1}{\beta \alpha_a}\frac{C_7^2\Big(1+\frac{2}{\beta^3}\Big)}{\min\{2-\mu_a,e^{-2\tau_1}\}}\frac{1}{C_3}\\
&\qquad+\frac{2\varepsilon C_7}{\beta\sqrt{\alpha_a}}+\frac{C_7^2\Big(1+\frac{2}{\beta^3}\Big)}{\min\{2-\mu_a,e^{-2\tau_1}\}} \frac{\varepsilon}{C_3}\Big]>0.
\end{split}
\end{equation}
It is direct to obtain \eqref{4.15} via Theorem \ref{thmin}.

For $(u_0,u_1)\in W_a^1(0,1)\times L^2(0,1)$, let us consider a sequence $\{(u_0^n,u_1^n)\}_{n\in \mathbb{N}}\in W_a^2(0,1)\times W_a^1(0,1)$  that approximate to $(u_0,u_1)\in W_a^1(0,1)\times L^2(0,1)$. Let $u^n$ be the solution associated to $\{(u_0^n,u_1^n)\}_{n\in \mathbb{N}}\in W_a^2(0,1)\times W_a^1(0,1)$. Then the desired result can be extend to any solution corresponding to $(u_0,u_1)\in W_a^1(0,1)\times L^2(0,1)$ by the approximation argument.

\end{proof}

\section*{Acknowledgements} 
M. Liao was supported by the National Natural Science Foundation of China(12401290) and the Natural Science Foundation of Jiangsu Province(BK20230946) and the Fundamental Research Funds for the Central Universities(B230201033, B240201090).

\section*{Competing Interests}
The authors declare that they have no competing interests.

\section*{Data Availability}
Data sharing is not applicable to this article as no new data were created or analyzed in this study.

\end{document}